\begin{document}
\newtheorem{theorem}{Theorem}[section]
\newtheorem{corollary}[theorem]{Corollary}
\newtheorem{definition}[theorem]{Definition}
\newtheorem{conjecture}[theorem]{Conjecture}
\newtheorem{question}[theorem]{Question}
\newtheorem{lemma}[theorem]{Lemma}
\newtheorem{proposition}[theorem]{Proposition}
\newtheorem{example}[theorem]{Example}
\newenvironment{proof}{\noindent {\bf
Proof.}}{\rule{3mm}{3mm}\par\medskip}
\newcommand{\remark}{\medskip\par\noindent {\bf Remark.~~}}
\newcommand{\pp}{{\it p.}}
\newcommand{\de}{\em}

\newcommand{\JEC}{{\it Europ. J. Combinatorics},  }
\newcommand{\JCTB}{{\it J. Combin. Theory Ser. B.}, }
\newcommand{\JCT}{{\it J. Combin. Theory}, }
\newcommand{\JGT}{{\it J. Graph Theory}, }
\newcommand{\ComHung}{{\it Combinatorica}, }
\newcommand{\DM}{{\it Discrete Math.}, }
\newcommand{\ARS}{{\it Ars Combin.}, }
\newcommand{\SIAMDM}{{\it SIAM J. Discrete Math.}, }
\newcommand{\SIAMADM}{{\it SIAM J. Algebraic Discrete Methods}, }
\newcommand{\SIAMC}{{\it SIAM J. Comput.}, }
\newcommand{\ConAMS}{{\it Contemp. Math. AMS}, }
\newcommand{\TransAMS}{{\it Trans. Amer. Math. Soc.}, }
\newcommand{\AnDM}{{\it Ann. Discrete Math.}, }
\newcommand{\NBS}{{\it J. Res. Nat. Bur. Standards} {\rm B}, }
\newcommand{\ConNum}{{\it Congr. Numer.}, }
\newcommand{\CJM}{{\it Canad. J. Math.}, }
\newcommand{\JLMS}{{\it J. London Math. Soc.}, }
\newcommand{\PLMS}{{\it Proc. London Math. Soc.}, }
\newcommand{\PAMS}{{\it Proc. Amer. Math. Soc.}, }
\newcommand{\JCMCC}{{\it J. Combin. Math. Combin. Comput.}, }
\newcommand{\GC}{{\it Graphs Combin.}, }

\title{ Tur\'{a}n numbers for disjoint paths
\thanks{This work is supported by the National Natural Science Foundation of China (Nos.11531001 and 11271256),  the Joint NSFC-ISF Research Program (jointly funded by the National Natural Science Foundation of China and the Israel Science Foundation (No. 11561141001)),  Innovation Program of Shanghai Municipal Education Commission (No. 14ZZ016) and Specialized Research Fund for the Doctoral Program of Higher Education (No.20130073110075).
\newline \indent $^{\dagger}$Correspondent author:
Xiao-Dong Zhang (Email: xiaodong@sjtu.edu.cn)}}
\author{ Long-Tu Yuan  and Xiao-Dong Zhang$^{\dagger}$   \\
{\small School of Mathematical Sciences, MOE-LSC, SHL-MAC
}\\
{\small Shanghai Jiao Tong University} \\
{\small  800 Dongchuan Road, Shanghai, 200240, P.R. China}\\
{\small Email: yuanlongtu@sjtu.edu.cn, xiaodong@sjtu.edu.cn }}
\date{}
\maketitle
\begin{abstract}
  The Tur\'{a}n number of a graph $H$, $ex(n,H)$, is the maximum number of edges in any graph of order $n$  which does not contain $H$ as a subgraph. Lidick\'{y}, Liu and  Palmer  determined $ex(n, F_m)$ for $n$ sufficiently large and proved  that the extremal graph is unique, where $F_m$ is disjoint paths of $P_{k_1}, \ldots, P_{k_m}$ [Lidick\'{y},~B., Liu,~H. and  Palmer,~C. (2013).  On the Tur\'{a}n number of forests. {\em   Electron. J. Combin. } {\bf 20(2)} Paper 62, 13 pp]. In this paper,  by mean  of a different approach, we  determine  $ex(n, F_m)$ for  all  integers $n$ with minor conditions, which extends their partial results. Furthermore, we  partly confirm the conjecture proposed by Bushaw and Kettle  for $ex(n, k\cdot P_l)$ [Bushaw,~N. and Kettle,~N. (2011) Tur\'{a}n numbers of multiple paths and equibipartite forests. {\em Combin. Probab. Comput.} {\bf 20} 837-853]. Moreover, we show that there exist two family graphs $F_m$ and $F_m^{\prime}$ such that $ex(n, F_m)=ex(n, F_m^{\prime})$ for all integers $n$, which is related to an old problem of Erd\H{o}s and Simonovits.
\end{abstract}

{{\bf Key words:} Tur\'{a}n number; disjoint paths;  extremal graph.}

{{\bf AMS Classifications:} 05C35, 05C05}.
\vskip 0.5cm
\section{Introduction}
The graphs considered in this paper are finite, undirected, and simple (no loops or multiple edges). Let $G=(V(G), E(G))$ be a simple graph,  where $V(G)$ is the vertex set and $E(G)$ is the edge set with size $e(G)$. Let $G$ and $H$ be two disjoint graphs. Denote by $G\bigcup H$ the disjoint union of $G$ and $H$ and by $k\cdot G$ the disjoint union of $k$ copies of a graph $G$. Denote by $G+H$ the graph obtained from $G\bigcup H$ by joining each vertex of $G$ to each vertex of $H$. If $S\subseteq V(G)$, the subgraph of $G$ induced by $S$ is denoted by $G[S]$. Moreover, Denote by $P_{k}$ a path on $k$ vertices, $K_{n}$ the completed graph with $n$ vertices, $\overline{G}$ the complement graph of $G$.

The {\it Tur\'{a}n number} of a graph $H$, $ex(n,H)$, is the maximum number of edges in a graph $G$ of order $n$  which does not contain $H$ as a subgraph. Denote by $Ex(n,H)$ a graph on $n$ vertices with $ex(n,H)$ edges  which does not contain $H$ as a subgraph and call this graph an {\it extremal graph} for $H$. Moreover,  Denote by $ex_{con}(n,H)$  the maximum number of edges in a connected graph $G$ of order $n$  which does not contain $H$ as a subgraph, and denote by $Ex_{con}(n,H)$ a  connected graph on $n$ vertices with $ex_{con}(n,H)$ edges  which does not contain $H$ as a subgraph.  Often there are several extremal graphs. In 1941, Tur\'{a}n \cite{turan1941} proved that the extremal graph  which does not contain $K_r$ as a subgraph is the complete $(r-1)$-partite graph on $n$ vertices which is balanced, in that the part sizes are as equal as possible (any two sizes differ by at most $1$). This balanced complete $(r-1)$-partite graph on $n$ vertices is the Tur\'{a}n graph $T_{r-1}(n)$. On the other hand, for sparse graphs,
 Erd\H{o}s and Gallai \cite{erdHos1959maximal} in 1959 proved the following well known result.

\begin{theorem}\cite{erdHos1959maximal}\label{Pathk1}
 Let $G$ be a graph with $n$ vertices. If $G$ does not contain a path with $k$ vertices and $n\ge k\ge 2$, then $e(G)\leq \frac{1}{2}(k-2)n$ with equality if and only if $n=(k-1)t$ and $G=\bigcup_{i=1}^{t}K_{k-1}$.
\end{theorem}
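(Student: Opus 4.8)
The plan is to induct on the order $n$. One direction is immediate: the graph $\bigcup_{i=1}^{t}K_{k-1}$ on $n=(k-1)t$ vertices has $t\binom{k-1}{2}=\frac{1}{2}(k-2)n$ edges and contains no $P_k$ because each component has only $k-1$ vertices, so it suffices to prove the inequality together with the fact that these are the only extremal graphs. When $n\le k-1$ there is nothing to do: $e(G)\le\binom{n}{2}\le\frac{1}{2}(k-2)n$ since $n-1\le k-2$, and equality forces $n=k-1$ and $G=K_{k-1}$, i.e.\ the case $t=1$. So suppose $n\ge k$ and that the theorem holds for all smaller orders. I would pick a vertex $v$ of minimum degree $\delta:=\delta(G)$ and split on the size of $\delta$.

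Consider first the case $\delta\le\frac{1}{2}(k-2)$. Deleting $v$ leaves a $P_k$-free graph on $n-1\ge k-1$ vertices, so by the induction hypothesis (or, when $n-1=k-1$, trivially)
\[ e(G)=e(G-v)+\delta\le\tfrac{1}{2}(k-2)(n-1)+\tfrac{1}{2}(k-2)=\tfrac{1}{2}(k-2)n. \]
If equality held here then $\delta=\frac{1}{2}(k-2)$ (so $k$ is even) and $G-v$ would be extremal on $n-1$ vertices, hence a disjoint union of copies of $K_{k-1}$ by induction. Taking a neighbour $u$ of $v$ inside such a copy $Q\cong K_{k-1}$ and following the edge $vu$ by a Hamiltonian path of $Q$ starting at $u$ produces a $P_k$ in $G$, a contradiction; hence in this case the inequality is strict (the small cases $k\le 3$ being checked directly).

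Consider next the case $\delta>\frac{1}{2}(k-2)$, i.e.\ $\delta\ge\frac{1}{2}(k-1)$. Every component $C$ of $G$ is connected with $\delta(C)\ge\delta$, and I claim $|V(C)|\le k-1$ for each such $C$: a connected graph of minimum degree $d$ on $m$ vertices contains a path on at least $\min(m,2d+1)$ vertices, and $2\cdot\frac{k-1}{2}+1=k$, so a component with $m\ge k$ vertices would contain a $P_k$. Consequently $e(C)\le\binom{|V(C)|}{2}\le\frac{1}{2}(k-2)|V(C)|$, and summing over the components gives $e(G)\le\frac{1}{2}(k-2)n$; here equality forces every component to be exactly a $K_{k-1}$, so $n=(k-1)t$ and $G=\bigcup_{i=1}^{t}K_{k-1}$. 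Together with the base range, this yields the full characterization.

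The crux, and the only part requiring genuine work, is the auxiliary fact used in the second case: a connected graph $H$ on $m$ vertices with $\delta(H)\ge d$ has a path on at least $\min(m,2d+1)$ vertices. I would prove it by a longest-path/rotation argument: if $P=v_1\cdots v_\ell$ is a longest path of $H$ then $N(v_1)$ and $N(v_\ell)$ lie on $P$, so $\ell\ge d+1$; if moreover $\ell\le 2d$, a pigeonhole count on $N(v_1)$ and $N(v_\ell)$ inside $\{v_2,\dots,v_{\ell-1}\}$ produces an index $i$ with $v_1v_{i+1},v_iv_\ell\in E(H)$, yielding a cycle through all $\ell$ vertices of $P$, and then connectedness lets us attach a vertex outside this cycle unless $\ell=m$ — contradicting maximality of $P$. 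Apart from this lemma, the only delicate points are the parity of $k$ in the equality analysis of the first case and the values $k\le 3$, which are routine.
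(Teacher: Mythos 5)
Your proof is correct. The paper does not prove this statement at all -- it is quoted from Erd\H{o}s and Gallai \cite{erdHos1959maximal} as a known result -- so there is nothing internal to compare against; what you give is essentially the classical Erd\H{o}s--Gallai induction on $n$: delete a vertex of degree at most $\frac{1}{2}(k-2)$ if one exists (and rule out equality there by extending a Hamiltonian path of a $K_{k-1}$-component of $G-v$ through $v$), and otherwise use the Dirac-type fact that a connected graph with minimum degree $d$ contains a path on $\min(m,2d+1)$ vertices, proved by the standard longest-path rotation/cycle argument, to cap every component at $k-1$ vertices. All the steps check out, including the integrality bookkeeping ($\delta>\frac{1}{2}(k-2)$ forcing $\delta\ge\frac{1}{2}(k-1)$) and the equality analysis, with the cases $k\le 3$ handled separately as you note.
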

   It follows from the above theorem that $ex(n, P_k)=\frac{1}{2}(k-2)n$ for $n=(k-1)t$, and $ex(n, P_k) $ is not determined for $k-1$ not being a factor of $n$.  Later Faudree and Schelp \cite{Faudree1975} extended the above result for all integers $n$ and $k$.

For convenience, we first introduce the following symbols.

\begin{definition}
Let $n\geq m\geq l\ge 3$ be given three positive integers. Then $n$ can be written as $n=(m-1)+t(l-1)+r$, where $t\geq 0$ and $ 0\leq r < l-1.$  Denote by
 $$[n,m,l]\equiv {m-1 \choose 2}+t{l-1 \choose 2}+{r \choose 2}$$
  and
  $$[n,m]\equiv {\lfloor\frac{m}{2}\rfloor-1 \choose 2}+\lfloor\frac{m-2}{2}\rfloor\left(n-\lfloor\frac{m}{2}\rfloor+1\right).$$ Moreover, if  $n\leq m-1$, denote by $$[n,m,l]\equiv{n \choose 2}.$$
\end{definition}
Let $n\geq m\geq l\ge 3$. If $G_{1}=K_{m-1}\bigcup t\cdot K_{l-1}\bigcup K_{r}$ and $G_{2}=\overline{K}_{n-\lfloor\frac{m}{2}\rfloor+1}+K_{\lfloor\frac{m}{2}\rfloor-1}$, then $$e(G_1)=[n,m,l],\ \  e(G_2)=[n,m].$$

\begin{theorem}\cite{Faudree1975}\label{Pathk2}
 Let $G$ be a graph with $n\ge k$ vertices, if $G$ does not contain a path with $k$ vertices, then $e(G)\le [n,k,k]$ with equality if and only if $G$ is either $G=
 (\bigcup_{i=1}^{t}K_{k-1})\bigcup K_{r}$ or $G=
 (\bigcup_{i=1}^{t-s-1}K_{k-1})\bigcup(K_{\frac{k-2}{2}}+\overline{K}_{\frac{k}{2}+s(k-1)+r})$ for some $s$, $0\leq s<t$, when $k$ is even, $t>0$, and $r=\frac{k}{2}$ or $\frac{k-2}{2}$, where $n=(k-1)t+r$ and $0\le r<k-1$.
\end{theorem}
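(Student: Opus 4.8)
\medskip
\noindent\textbf{Proof proposal.}
The plan is to prove the upper bound $e(G)\le[n,k,k]$ and the description of the extremal graphs together, by induction on $n$; the main tools are a super-additivity property of $[\,\cdot\,,k,k]$ and a longest-path argument. Two preliminary observations come first. (i)~Every graph in the list is $P_k$-free with exactly $[n,k,k]$ edges: a disjoint union of copies of $K_{k-1}$ and one $K_r$ obviously contains no $P_k$, while $K_{(k-2)/2}+\overline{K}_m$ (with $m\ge\frac{k}{2}$) has longest path on $2\cdot\frac{k-2}{2}+1=k-1$ vertices, and its edge count equals $[n,k,k]$ because the identity $\binom{k-1}{2}+\binom{r}{2}=\binom{(k-2)/2}{2}+\frac{k-2}{2}\big(\frac{k}{2}+r\big)$ reduces to $(a-r)(a-r+1)=0$ with $a=\frac{k-2}{2}$, which is exactly the case $r\in\{\frac{k-2}{2},\frac{k}{2}\}$. (ii)~For positive integers $n_1,n_2$ with $n_i\equiv r_i\pmod{k-1}$, $0\le r_i\le k-2$, one has
$$[n_1,k,k]+[n_2,k,k]\le[n_1+n_2,k,k],$$
which, via $\binom{a+b}{2}-\binom{a}{2}-\binom{b}{2}=ab$, reduces to $r_1r_2\ge0$ if $r_1+r_2\le k-2$ and to $(k-1-r_1)(k-1-r_2)\ge1$ if $r_1+r_2\ge k-1$; in particular equality holds if and only if $(k-1)\mid n_1$ or $(k-1)\mid n_2$.

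For the induction step, suppose first that $G$ is disconnected. Pick one connected component $G_1$, put $G_2=G-V(G_1)$, and note both are $P_k$-free; by the induction hypothesis — or, if an order falls below $k$, by the trivial bound $e(G_i)\le\binom{n_i}{2}=[n_i,k,k]$ — we get $e(G)=e(G_1)+e(G_2)\le[n_1,k,k]+[n_2,k,k]\le[n,k,k]$. If equality holds, it holds throughout: $G_1,G_2$ are extremal on their orders and $(k-1)$ divides $n_1$ or $n_2$, and feeding these constraints back into the induction hypothesis shows $G$ is again one of the listed graphs — in particular that the ``mixed'' type arises only for $k$ even and $r\in\{\frac{k-2}{2},\frac{k}{2}\}$. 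This part is routine but must be done carefully.

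The connected case is the substantive one. Let $G$ be connected and $P_k$-free with $n\ge k$, and let $P=v_1\cdots v_p$ be a longest path, so $p\le k-1$; since $n>p$ there is a vertex off $P$, joined through vertices off $P$ to some vertex of $V(P)$ by connectivity. Maximality of $P$ gives $N(v_1),N(v_p)\subseteq V(P)$, and a rotation argument shows that $v_1v_{i+1}\in E(G)$ and $v_iv_p\in E(G)$ cannot hold at once (else $V(P)$ carries a $p$-cycle, which extended through a vertex off $P$ produces a path on $p+1$ vertices). Hence, writing $A=\{i:v_1v_i\in E\}$ and $B=\{i:v_iv_p\in E\}$, the sets $A-1$ and $B$ are disjoint subsets of $\{1,\dots,p-1\}$, so $d(v_1)+d(v_p)=|A|+|B|\le p-1\le k-2$ and some endpoint $v$ has $d(v)\le\lfloor\frac{k-2}{2}\rfloor$. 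Deleting $v$ and applying the induction hypothesis gives $e(G)\le[n-1,k,k]+\lfloor\frac{k-2}{2}\rfloor$; since $[n,k,k]-[n-1,k,k]$ equals $r-1$ (when $n\equiv r\pmod{k-1}$, $r\ge1$) or $k-2$ (when $(k-1)\mid n$), this already yields $e(G)<[n,k,k]$, and hence no connected extremal graph, whenever $r=0$ or $r-1\ge\lfloor\frac{k-2}{2}\rfloor$.

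The main obstacle is the remaining range $1\le r\le\lfloor\frac{k-2}{2}\rfloor$, where deleting one vertex is too wasteful — the candidate extremal graph $K_{(k-2)/2}+\overline{K}_{n-(k-2)/2}$ has all its low-degree vertices of degree exactly $\lfloor\frac{k-2}{2}\rfloor$, so no such induction can be tight. I would close this range using the sharp bound for connected $P_k$-free graphs, $ex_{con}(n,P_k)\le[n,k]$ (Kopylov; Balister--Gy\H{o}ri--Lehel--Schelp), combined with $[n,k]\le[n,k,k]$: the difference is $\binom{r-a}{2}$ when $k$ is even ($a=\frac{k-2}{2}$), and that plus $\lfloor\frac{k}{2}\rfloor$ times the number of $K_{k-1}$-blocks when $k$ is odd ($a=\frac{k-3}{2}$), hence always nonnegative and zero exactly when $k$ is even and $r\in\{\frac{k-2}{2},\frac{k}{2}\}$; in that case $G$ must be the connected extremal graph $K_{(k-2)/2}+\overline{K}_{n-(k-2)/2}$, which is the listed ``mixed'' graph with no $K_{k-1}$-blocks. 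One can also avoid quoting this and argue directly in the style of Erd\H{o}s--Gallai: bound the edges between each component of $G-V(P)$ and $P$ by how far around $P$ its extreme attachment points reach without creating a $P_k$, and bound $e(G[V(P)])$ through the cycle structure forced above. Either way the connected case, together with the disconnected case and the equality analysis, completes the induction.
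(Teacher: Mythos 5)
The paper does not prove this statement at all: it is quoted verbatim from Faudree--Schelp with a citation, so there is no in-paper argument to compare against. Judged on its own, your outline (induction on $n$; superadditivity of $[\cdot,k,k]$ for the disconnected case; longest-path rotation plus the connected bound for the connected case) is a reasonable and essentially workable modern route, and it matches the methodology the authors themselves use for their new results (their Observations 1--7 and Lemma 2.4 are exactly this kind of bookkeeping). Your verifications in (i) and (ii) are correct, including the equality condition $r_1=0$ or $r_2=0$.

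There are, however, two concrete defects sitting exactly on top of the exceptional extremal graphs. First, an off-by-one: deleting an endpoint of degree at most $\lfloor\frac{k-2}{2}\rfloor$ gives $e(G)\le[n-1,k,k]+\lfloor\frac{k-2}{2}\rfloor$, which is \emph{strictly} less than $[n,k,k]$ only when $r=0$ or $r-1>\lfloor\frac{k-2}{2}\rfloor$; at $r-1=\lfloor\frac{k-2}{2}\rfloor$, i.e. $r=\lfloor\frac{k}{2}\rfloor$, you only get $\le$, and for $k$ even $r=\frac{k}{2}$ is precisely a case where a connected extremal graph ($K_{(k-2)/2}+\overline{K}_{n-(k-2)/2}$) exists, so the claimed strictness is false there. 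Since your second argument only covers $1\le r\le\lfloor\frac{k-2}{2}\rfloor$, the case $r=\lfloor\frac{k}{2}\rfloor$ falls through both halves as written. Second, you misquote the connected bound: by the paper's Theorem~1.3 it is $\max\{\binom{k-2}{2}+n-k+2,\,[n,k]+i-1\}$ with $i=2$ for $k$ odd, not $[n,k]$; so for odd $k$ you must absorb an extra $+1$ (which works because $[n,k,k]-[n,k]\ge\lfloor\frac{k}{2}\rfloor$ there), and you must separately check $\binom{k-2}{2}+n-k+2\le[n,k,k]$ (true for $n\ge k\ge4$, with equality only at $k=4$, $n\in\{4,5\}$, where the graph $(K_{k-3}\bigcup\overline{K}_{n-k+2})+K_1$ coincides with a star and is again of the listed form). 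Both defects are repairable --- indeed, applying the full Theorem~1.3 with its equality characterization to every connected $P_k$-free graph handles all residues $r$ at once and renders your endpoint-deletion step unnecessary --- but as written the argument has a hole exactly where the ``mixed'' extremal graphs live, which is the part of the statement that actually requires care.
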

In other words, $ex(n, P_k)$ has been determined  for all integers $n> k$ and all extremal graphs has also been characterized. For connected graphs, Kopylov \cite{Kopylov1977}, in 1977, determined $ex_{con}(n, P_k)$. In 2008, Balister, Gy\H{o}ri, Lehel and Schelp \cite{Balister2008} used a different approach and determined $ex_{con}(n, P_k)$ and characterized all extremal graphs.
\begin{theorem}\cite{Balister2008,Kopylov1977}\label{conenctedpath}
Let $G$ be a connected graph on $n\geq k\geq4$ vertices which does not contain a path with $k$ vertices. Then
$$e(G)\leq \max\left\{{k-2 \choose 2}+(n-k+2), [n,k]+i-1\right\}.$$
$$(i=2 \mbox{ when } k \mbox{ is odd, } i=1 \mbox{ when } k \mbox{ is even})$$
 Further, the equality occurs if and only if $G$ is either $$(K_{k-3}\bigcup\overline{K}_{n-k+2})+K_{1}$$ or $$(K_{i}\bigcup\overline{K}_{n-\lfloor\frac{k+1}{2}\rfloor})+K_{\lfloor\frac{k}{2}\rfloor-1}.$$
\end{theorem}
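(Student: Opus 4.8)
\medskip\noindent\emph{Proof sketch.}
The plan is to combine a block decomposition of $G$ with a Kopylov-type edge bound for $2$-connected graphs. First I would pass to an edge-maximal connected $P_k$-free graph $G$ on $n$ vertices and argue by induction on $n$; the base case $n=k$ together with a bounded range of small $n$ reduce to finite checks, and the small orders $k\in\{4,5\}$ will be handled separately at the end, since there the two terms $\binom{k-2}{2}+(n-k+2)$ and $[n,k]+i-1$ coincide or interchange dominance. The induction step splits according to whether $G$ is $2$-connected.

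\smallskip\noindent\textbf{The $2$-connected case.}
The geometric input is that a vertex $v$ lying outside a longest cycle $C$ of a $2$-connected graph yields, via the fan lemma, two internally disjoint $v$--$C$ paths, which splice with an arc of $C$ into a path on at least $|C|+1$ vertices. Hence a $2$-connected graph on $n\ge k$ vertices with no $P_k$ is not a short cycle and cannot have a vertex off its longest cycle without forcing a $P_k$, so its circumference is at most $k-2$; in particular such graphs exist only for $k\ge 6$, because every $2$-connected graph on at least $k\le 5$ vertices contains $P_k$ (a short ear-decomposition argument shows a $2$-connected graph all of whose cycles are triangles is $K_3$). I would then invoke Kopylov's theorem: a $2$-connected graph on $n$ vertices with no cycle on $k-1$ or more vertices has at most $\max\bigl\{\binom{k-3}{2}+2(n-k+3),\ \binom{\lceil k/2\rceil}{2}+(\lfloor k/2\rfloor-1)(n-\lceil k/2\rceil)\bigr\}$ edges, with equality only for $K_2+(\overline{K}_{n-k+3}\cup K_{k-5})$, respectively $K_{\lfloor k/2\rfloor-1}+(\overline{K}_{n-\lfloor(k+1)/2\rfloor}\cup K_i)$. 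A short calculation identifies the second quantity with $[n,k]+i-1$ and shows that for $k\ge 6$ the first quantity never exceeds it; this gives the bound and the second extremal family in this case.

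\smallskip\noindent\textbf{The cut-vertex case.}
Let $c$ be a cut vertex of $G$, let $B$ be an end-block at $c$ (a maximal $2$-connected subgraph, or a single edge, that meets the rest of $G$ only in $c$), and set $G'=G-(V(B)\setminus\{c\})$, a connected graph on $n-|V(B)|+1$ vertices. Since $B$ is an end-block, a copy of $P_k$ in $G$ lies entirely in $B$, lies entirely in $G'$, or is the concatenation at $c$ of a path of $B$ ending at $c$ with a path of $G'$ ending at $c$. Writing $p_c(H)$ for the largest order of a path in $H$ with an end at $c$, this shows that $G$ is $P_k$-free if and only if $B$ and $G'$ are $P_k$-free and $p_c(B)+p_c(G')\le k$. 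Then $e(G)=e(B)+e(G')$, where $e(G')$ is bounded by the induction hypothesis and $e(B)$ by an Erd\H{o}s--Gallai/Kopylov-type estimate in terms of $|V(B)|$ and $p_c(B)$ (for $2$-connected $B$ one controls the circumference via the relation between long paths and long cycles). Optimising over admissible pairs $(p_c(B),p_c(G'))$ with $p_c(B)+p_c(G')\le k$, I expect the outcome to be: either $e(G)\le\binom{k-2}{2}+(n-k+2)$ with equality forcing $B=K_{k-2}$ and $G=(K_{k-3}\cup\overline{K}_{n-k+2})+K_1$, or the optimum is dominated by $[n,k]+i-1$. In either subcase $e(G)$ does not exceed the claimed maximum, and tracing equality yields exactly the two listed graphs.

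\smallskip\noindent\textbf{Main obstacle.}
The numerical inequality is comparatively routine; the real work is the equality analysis. One must exclude ``hybrid'' near-extremal configurations --- a dense $2$-connected core with several small appendages hanging off cut vertices --- and verify that the optimisation in the cut-vertex case collapses precisely to the two graphs in the statement rather than to some third structure with the same number of edges; this is where essentially all of the case work lives. A secondary technical point is obtaining a bound on $e(B)$ that stays sharp when $B$ is small, for which the crude estimate $e(B)\le ex(|V(B)|,P_{p_c(B)+1})$ loses too much and the circumference of $B$ must be tracked. Finally, the orders $k=4,5$ and a short range of small $n$ for each $k$ are checked by hand.
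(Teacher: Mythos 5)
First, a point of reference: the paper does not prove this theorem at all --- it is quoted from Kopylov and from Balister--Gy\H{o}ri--Lehel--Schelp --- so there is no internal proof to compare against. Your outline is the classical Kopylov strategy (pass to bounded circumference in the $2$-connected case, recurse over end-blocks at a cut vertex otherwise), and as a strategy it is the right one. But two things in the sketch are genuinely problematic. The first is a concretely false claim: in the $2$-connected case you assert that for $k\ge 6$ the quantity $\binom{k-3}{2}+2(n-k+3)$ never exceeds $[n,k]+i-1$. Take $n=k=20$: the first quantity is $\binom{17}{2}+2\cdot 3=142$, while $[20,20]+i-1=\binom{9}{2}+9\cdot 11=135$. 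What actually saves the bound is that $\binom{k-3}{2}+2(n-k+3)\le \binom{k-2}{2}+(n-k+2)$ precisely when $n\le 2k-7$, while for $n\ge 2k-7$ and $k\ge 8$ the term $[n,k]+i-1$ has already overtaken both; so the first Kopylov term must be compared against the \emph{full} maximum in the statement, not against $[n,k]+i-1$ alone. This repair also forces an extra check in the equality analysis, namely that $K_2+(K_{k-5}\cup\overline{K}_{n-k+3})$ never ties the overall maximum for $k\ge 8$ (for $k\in\{6,7\}$ it coincides with the second listed extremal graph, so nothing is lost there).

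The second and larger issue is that the cut-vertex case --- which you yourself identify as the place where all the work lives --- is asserted rather than argued. The decomposition claim ($G$ is $P_k$-free iff $B$ and $G'$ are $P_k$-free and $p_c(B)+p_c(G')\le k$) is correct, but the step ``$e(B)$ is bounded by an Erd\H{o}s--Gallai/Kopylov-type estimate in terms of $|V(B)|$ and $p_c(B)$, and optimising over admissible pairs I expect the outcome to be\dots'' is exactly the content of Kopylov's key lemma on $2$-connected graphs in which every path starting at a distinguished vertex has at most $s$ vertices. That lemma is neither stated precisely nor proved in your sketch, and without it the induction does not close: as you note, the crude bound $e(B)\le ex(|V(B)|,P_{p_c(B)+1})$ is too lossy to recover the stated maximum, let alone the two-graph equality characterization. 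So the proposal is a correct \emph{plan} for the known proof, with one wrong numerical comparison and with its central quantitative lemma and optimisation left as an expectation rather than an argument.
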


\begin{remark}
A simple calculation shows that for $ k>5$, if $k$ is even, the extremal graphs are
$$\begin{array}{llll}
(K_{k-3}\bigcup\overline{K}_{n-k+2})+K_{1} &\mbox{for}\ \ n\leq \frac{5k-10}{4};\\ (K_{1}\bigcup\overline{K}_{n-\lfloor\frac{k+1}{2}\rfloor})+K_{\lfloor\frac{k}{2}\rfloor-1} &\mbox{for}\ \ n\geq\frac{5k-10}{4}. \end{array}$$
If $k$ is odd, the extremal graphs are
$$\begin{array}{llll}
(K_{k-3}\bigcup\overline{K}_{n-k+2})+K_{1} &\mbox{for}\ \ n\leq \frac{5k-7}{4};\\ (K_{2}\bigcup\overline{K}_{n-\lfloor\frac{k+1}{2}\rfloor})+K_{\lfloor\frac{k}{2}\rfloor-1} &\mbox{for}\ \ n\geq \frac{5k-7}{4}. \end{array}$$
\end{remark}

 In 1962, Erd\H{o}s \cite{erdHos1962} first studied on the Tur\'{a}n number of $k\cdot K_3$. Later, Moon \cite{moon1968} (only when $r-1$ divides $n-k+1$) and Simonovits \cite{simonovits1968} showed that $K_{k-1}+T_{r-1}(n-k+1)$ is the unique extremal graph which does not contain $k\cdot K_r$ for $n$ sufficiently large. In 2011, Bushaw and Kettle \cite{Bushwa2011} determined $ex(n,k\cdot P_{l})$ for $n$ sufficiently large.

 \begin{theorem}\cite{Bushwa2011}\label{kpl}
If $k\ge 2, l\ge 4$ and $n\ge 2l+2kl(\lceil\frac{l}{2}\rceil+1) {l \choose \lfloor\frac{l}{2}\rfloor}$, then
$$ex(n,k\cdot P_{l})=\left[n,k\lfloor\frac{l}{2}\rfloor\right]+c_l, $$
where $c_l=1$ if $l$ is odd, and $c_l=0$ if $l$ is even.
\end{theorem}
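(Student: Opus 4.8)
The plan is to establish the lower bound with a single explicit graph and the upper bound by a component decomposition that reduces matters to a ``packing'' strengthening of Theorem~\ref{conenctedpath}, followed by an elementary optimisation. Throughout set $t=\lfloor l/2\rfloor$, so $t\ge2$ since $l\ge4$.

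\textbf{The extremal construction.} For $l$ even I take $G^{*}=K_{kt-1}+\overline{K}_{n-kt+1}$, and for $l$ odd $G^{*}=K_{kt-1}+\big(K_{2}\cup\overline{K}_{n-kt-1}\big)$; a direct count gives $e(G^{*})=\binom{kt-1}{2}+(kt-1)(n-kt+1)+c_{l}$, the claimed value. To see $G^{*}$ is $k\cdot P_{l}$-free, observe that in any graph $K_{s}+\overline{K}_{m}$ the vertices of a copy of $P_{l}$ lying on the independent side form an independent set of $P_{l}$, hence number at most $\lceil l/2\rceil$, so at least $t$ vertices lie on the clique side; a short case check on the position of the one exceptional edge along a path shows this bound $t$ persists when $l$ is odd. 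Thus $k$ vertex-disjoint copies of $P_{l}$ would need $kt>kt-1$ clique vertices.

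\textbf{Reduction to a connected packing estimate.} Let $G$ be $k\cdot P_{l}$-free on $n$ vertices with $n$ large. Let $B$ be the union of the components of $G$ containing no $P_{l}$ and $A$ the union of the remaining components; then $G[B]$ is $P_{l}$-free, so $e(G[B])\le[\,|B|,l,l\,]\le\frac{l-2}{2}|B|$ by Theorem~\ref{Pathk2}. Writing $\nu(C)$ for the largest number of vertex-disjoint copies of $P_{l}$ in a component $C\subseteq A$, one has $\sum_{C\subseteq A}\nu(C)\le k-1$. The heart of the argument is the following estimate, which for $\nu(C)=0$ is exactly Theorem~\ref{conenctedpath}: there is $M=M(k,l)$ so that every connected graph $C$ on $m$ vertices with $\nu(C)\le j\le k-1$ satisfies $e(C)\le\binom{m}{2}$ when $m\le M$, and otherwise
\[
e(C)\le\binom{(j+1)t-1}{2}+\big((j+1)t-1\big)\big(m-(j+1)t+1\big)+c_{l}.
\]

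\textbf{The main obstacle: proving the estimate.} I would induct on $j$, the base $j=0$ being Theorem~\ref{conenctedpath}. For the inductive step pick a copy $Q\cong P_{l}$ lying on a longest path of $C$, so that $\nu(C-V(Q))\le j-1$. The difficulty is that $C-V(Q)$ may be disconnected and that the edges from $V(Q)$ to the rest of $C$ must be controlled, so the inductive hypothesis cannot be applied directly. The intended remedy is a Kopylov--Erd\H{o}s--Gallai-type dichotomy: either $C$ has a vertex cut of at most $(j+1)t-1$ vertices whose removal leaves only tiny (in particular $P_{l}$-free) pieces, in which case the displayed bound follows by direct counting; or, by rerouting along the longest path, one threads enough high-degree vertices into $j+1$ pairwise disjoint copies of $P_{l}$ --- each assembled as an alternating path hub, outside, hub, $\dots$ on $t$ hubs together with private outside vertices, with a spare edge fixing the parity when $l$ is odd --- contradicting $\nu(C)\le j$. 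Making this dichotomy quantitative (exactly how many high-degree vertices force $j+1$ disjoint copies, plus the odd-$l$ parity bookkeeping) is the technical crux, and it is where largeness of $n$ is consumed; iterating the estimate through $k-1$ steps produces a threshold $n\ge n_{0}(k,l)$.

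\textbf{Optimisation and uniqueness.} Granting the estimate, its right side equals $a_{C}m-\binom{a_{C}+1}{2}+c_{l}$ with $a_{C}=(\nu(C)+1)t-1$. Since the budget $\sum_{C\subseteq A}\nu(C)\le k-1$ forces $a_{C}\le kt-1$, with a single component of packing number $k-1$ being the only way to attain $a_{C}=kt-1$, and since $\frac{l-2}{2}<kt-1$ (here $k\ge2$ enters), moving vertices into $B$ or splitting the budget among several components of $A$ strictly decreases the leading term; hence $e(G)\le\binom{kt-1}{2}+(kt-1)(n-kt+1)+c_{l}$, matching the construction. Equality forces precisely that configuration together with equality in the packing estimate, and tracing this back through the induction --- ultimately through the equality case of Theorem~\ref{conenctedpath} --- forces $G=G^{*}$, which gives the uniqueness of the extremal graph.
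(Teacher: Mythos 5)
First, a point of context: Theorem~\ref{kpl} is quoted from \cite{Bushwa2011} and is not proved anywhere in this paper, so there is no internal proof to compare yours against; I assess your argument on its own terms and against what the paper does prove.

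Your lower bound is complete and correct: the graphs $K_{kt-1}+\overline{K}_{n-kt+1}$ (plus one extra edge when $l$ is odd) have the claimed number of edges, and your segment-counting argument that every copy of $P_l$ must place at least $t=\lfloor l/2\rfloor$ vertices in the clique side, even in the presence of the single exceptional edge, is sound. The component decomposition, the bound $e(G[B])\le\frac{l-2}{2}|B|$ from Theorem~\ref{Pathk2}, and the budget $\sum_{C}\nu(C)\le k-1$ are all fine, and the closing optimisation is essentially right for large $n$ (modulo the caveat that small components bounded by $\binom{m}{2}$ can be denser per vertex when $l$ is odd, so one must argue that only one component can be large and the rest contribute $O_{k,l}(1)$).

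The genuine gap is the ``connected packing estimate,'' which you yourself flag as the technical crux and do not prove. The assertion that every connected $C$ on $m$ vertices with $\nu(C)\le j$ satisfies $e(C)\le\binom{(j+1)t-1}{2}+\bigl((j+1)t-1\bigr)\bigl(m-(j+1)t+1\bigr)+c_l$ for large $m$ is a connected version of the very theorem being proved (with $k$ replaced by $j+1$), so the entire difficulty of Bushaw--Kettle's result has been repackaged into one unproven lemma: the proposed induction on $j$ is acknowledged to break (deleting a $P_l$ disconnects $C$ and leaves the cross edges uncontrolled), and the Kopylov-type dichotomy offered as a repair is described only qualitatively, with no argument that failure of the small-cut alternative actually yields $j+1$ disjoint copies, no treatment of the odd-$l$ parity, and no value of $M(k,l)$. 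It is worth noting how this gap sits relative to the present paper: when $l$ is even your packing estimate \emph{is} available --- it is exactly Lemma~\ref{connected}(1) combined with Theorem~\ref{conenctedpath}, since for all-even paths the connected problem collapses to $ex_{con}(m,P_{(j+1)l})$ --- and your argument can then be completed essentially as in the proof of Theorem~\ref{main} and its corollary, which is how the paper confirms the conjecture of \cite{Bushwa2011} for even $l$ and all $n\ge\frac54 kl$. When $l$ is odd the estimate is precisely the missing ingredient; the paper itself can only establish it in the special cases $P_{2l+1}\cup P_3$ and $P_5\cup P_5$ by ad hoc structural analysis of a longest path, and leaves the general case as a conjecture. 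So the step you defer is not a routine technicality but the substantive content of the theorem, and as written the upper bound is not proved.
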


Further, their proof shows that their construction is optimal for $n = \Omega(kl^{\frac{3}{2}}2^l)$. Hence Bushaw and Kettle conjectured
that their construction is optimal for $n = \Omega(kl).$ In other words, they conjectured that the above theorem holds for $n = \Omega(kl).$
 Recently, Lidick\'{y}, Liu and Palmer \cite{lidicky2013} extended  Bushaw and Kettle's result and  determined $ex(n,F_m)$ for $n$ sufficiently large, where  $F_{m}=P_{k_1}\bigcup P_{k_2}\bigcup \ldots \bigcup P_{k_m} $ and $k_1\geq k_2\geq \ldots \geq k_m.$

\begin{theorem}\cite{lidicky2013}\label{Fm}
Let $F_m=P_{k_1}\bigcup P_{k_2}\bigcup \ldots \bigcup P_{k_m}$ and $k_1\geq k_2\geq \ldots \geq k_m.$ If at least one $k_i$ is not $3$, then for $n$ sufficiently large,
$$ex(n,F_m)=\left[n,\sum_{i=1}^{m}\lfloor\frac{k_i}{2}\rfloor\right]+c,$$
where $c=1$ if all $k_i$ are odd and $c=0$ otherwise. Moreover, the extremal graph is unique.
\end{theorem}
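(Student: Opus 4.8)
\medskip
\noindent\textbf{Sketch of an approach.}
The lower bound is realised by $G^{\ast}:=K_{s-1}+\overline{K}_{\,n-s+1}$, where $s:=\sum_{i=1}^{m}\lfloor k_i/2\rfloor$, together with one extra edge placed inside the independent part precisely when every $k_i$ is odd. In a graph $K_{a}+H$ with $H$ edgeless, a copy of $P_{k}$ meets $K_{a}$ in at least $k-\lceil k/2\rceil=\lfloor k/2\rfloor$ vertices, since a path has independence number $\lceil k/2\rceil$; and when $k$ is odd this lower bound persists even if one edge is added to $H$. Hence, whether or not the extra edge is present, hosting $F_m$ inside $G^{\ast}$ would require at least $\sum\lfloor k_i/2\rfloor=s>s-1$ vertices of $K_{s-1}$, which is impossible, so $G^{\ast}$ is $F_m$-free; a routine computation gives $e(G^{\ast})=[n,\sum\lfloor k_i/2\rfloor]+c$. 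The upper bound $ex(n,F_m)\le e(G^{\ast})$ I would prove by induction on $m$.

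The base case $m=1$ is Theorem~\ref{Pathk2}. For the inductive step, record first that once $m\ge2$ the value $e(G^{\ast})$ strictly exceeds $[n,k_1,k_1]$ for $n$ large, because the coefficient of the linear term jumps from $\tfrac12(k_1-2)$ up to $s-1\ge\lfloor k_1/2\rfloor$; the same comparison shows $e(G^{\ast})$ also dominates $ex_{con}(n,P_{k_1})$ (Theorem~\ref{conenctedpath}) and $ex(n,P_{k_1+\cdots+k_m})$. Now fix $m\ge2$ and an $F_m$-free graph $G$ on $n$ vertices with $n$ large. Since $k_1\ge\cdots\ge k_m$ and not all $k_i$ equal $3$, we have $k_1\neq3$, so $F_{m-1}:=P_{k_1}\cup\cdots\cup P_{k_{m-1}}$ again meets the hypothesis. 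If $G$ has no $P_{k_m}$ then $e(G)\le[n,k_m,k_m]<e(G^{\ast})$ by Theorem~\ref{Pathk2}; otherwise pick a copy $Q\cong P_{k_m}$ maximising the number of edges of $G$ incident with $V(Q)$. As $G$ is $F_m$-free, $R:=G-V(Q)$ contains no $F_{m-1}$, so by induction $e(R)\le ex(n-k_m,F_{m-1})$.

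Writing $e(G)=e(G[V(Q)])+e(V(Q),V(R))+e(R)$ with $e(G[V(Q)])\le\binom{k_m}{2}$ and inserting the bound on $e(R)$, everything reduces to showing that the number of edges between $V(Q)$ and $V(R)$ is at most $\lfloor k_m/2\rfloor\,(n-k_m)$ up to a constant depending only on $k_1,\dots,k_m$; equivalently, all but boundedly many vertices of $R$ have at most $\lfloor k_m/2\rfloor$ neighbours on $Q$. If some $v\in V(R)$ had more, then two of its neighbours on $Q=q_1\cdots q_{k_m}$ would lie within distance $2$ along $Q$, so $v$ could be spliced into $Q$ — lengthening it by one vertex, or exchanging it for an interior vertex and freeing the latter. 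Iterating, and using that a longest path of $G$ has fewer than $k_1+\cdots+k_m$ vertices (else $G\supseteq F_m$), this terminates; one then shows the set of surviving ``heavy'' vertices has size at most $s-1$ and that deleting it leaves an essentially edgeless graph, yielding $e(G)\le e(G^{\ast})$ with equality only for $G\cong G^{\ast}$. A disconnected $G$ is reduced to this by first noting that for $n$ large an extremal example has at most one component of order exceeding $\sum k_i$, the remaining components carrying only $O(1)$ edges.

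The main obstacle is precisely this rotation/splicing analysis: the trivial bound $k_m(n-k_m)$ on the number of edges between $V(Q)$ and $V(R)$ overshoots by roughly a factor of two, and pushing it down to $\lfloor k_m/2\rfloor(n-k_m)+O_{k_1,\dots,k_m}(1)$ — pinning down the constant $c$ exactly, and keeping the estimates valid for \emph{all} $n$ above an explicit threshold rather than only for enormous $n$ (the ``minor conditions'') — requires careful bookkeeping of how inserting vertices into $Q$ interacts with the $F_{m-1}$-free structure of $R$. A secondary difficulty is the regime where $k_1$ greatly exceeds $k_2,\dots,k_m$, where clique-type competitors such as near-disjoint unions of copies of $K_{k_1-1}$ must also be ruled out; and uniqueness needs the concluding stability step that rebuilds any near-extremal $G$ into $G^{\ast}$ without loss of edges.
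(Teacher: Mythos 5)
This statement is quoted from Lidick\'{y}, Liu and Palmer \cite{lidicky2013}; the paper under review gives no proof of it, so there is nothing internal to compare against. Judging your sketch on its own merits: the lower bound is fine (the independence-number argument for why $K_{s-1}+\overline{K}_{n-s+1}$, with $s=\sum\lfloor k_i/2\rfloor$ and the extra edge in the odd case, is $F_m$-free is correct), but the upper-bound plan has a genuine gap at exactly the step you flag as the ``main obstacle'', and the gap is not just missing bookkeeping: the key lemma you propose is false as stated. Take $m=2$, $k_1=k_2=6$, so $s=6$ and the extremal graph is $G^{\ast}=K_5+\overline{K}_{n-5}$. If $Q\cong P_6$ is chosen to maximise the number of edges incident with $V(Q)$, it uses all five clique vertices, and then \emph{every} one of the $n-6$ remaining vertices has $5>\lfloor k_m/2\rfloor=3$ neighbours on $Q$. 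So ``all but boundedly many vertices of $R$ have at most $\lfloor k_m/2\rfloor$ neighbours on $Q$'' fails in the extremal graph itself, and the subsequent claim that the surviving heavy vertices number at most $s-1$ fails equally badly (here all $n-O(1)$ vertices are heavy, even with respect to a longest path). The reason your leading-order accounting still looks consistent is that in this configuration the slack migrates into the bound $e(R)\le ex(n-k_m,F_{m-1})$, which is then far from tight; the two per-piece bounds cannot both hold, so the decomposition ``one copy of $P_{k_m}$ plus the rest'' does not localise the edge count. The published proof avoids this by removing roughly $\lfloor k_1/2\rfloor$ high-degree vertices (half a path's worth) rather than an entire path, which is the right unit for the induction.

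Two further points. First, even if a cross-degree bound of the form $\lfloor k_m/2\rfloor(n-k_m)+O_{k_1,\dots,k_m}(1)$ were available, an unspecified $O(1)$ error cannot determine the extremal number exactly, let alone the constant $c$ or uniqueness; the theorem requires the additive constant to be pinned down precisely. Second, your base case is off: the formula $[n,\sum\lfloor k_i/2\rfloor]+c$ does not hold for $m=1$ (by Theorem~\ref{Pathk2}, $ex(n,P_{k})=[n,k,k]$, which exceeds the join-type count for most residues of $n$ modulo $k-1$), so the induction must bottom out at $m=2$ or carry a different inductive statement for the single-path case. As it stands the proposal is a plausible programme whose central lemma needs to be replaced, not merely completed.
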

However, They do not consider the Tur\'{a}n number $ex(n, F_m)$ for small $n$. If $k_1=k_2=\ldots=k_m=3$, Gorgol \cite{gorgol} determined $ex(n,2\cdot P_3)$ and $ex(n,3\cdot P_3)$. Further Bushaw and Kettle \cite{Bushwa2011} determined $ex(n,k\cdot P_3)$ for $n\ge 7k$, and the extremal graphs  are unique $K_{k-1}+M_{n-k+1}$. Later, Yuan and Zhang \cite{Yuan} determined the value $ex(n,k\cdot P_3)$ for all $n$ and all extremal graphs which  are $K_{3k-1}\bigcup M_{n-3k+1}$ and $K_{k-1}+M_{n-k+1}$.

  Further Erd\H{o}s and Simonovits \cite{erdHos1976}  (see also \cite{bollobas1979}, chapter 6, problem 41.) asked that if $F_1$ and $F_2$ are two bipartite graphs, Giving conditions on $F_1$ and $F_2$ ensuring that $ex(n,F_1)=ex(n,F_2)$. In addition, it is nature to ask what is the Tur\'{a}n number of disjoint union of paths, cycles in hypergraphs (see \cite{bushwa2014, gu}). Another similar problem is the Erd\H{o}s's matching conjecture \cite{erdHos1965} which is a very difficult problem of Tur\'{a}n problems for expansions \cite{Mubayi2015}, especially when $n$ is small. The readers may be referred to \cite{Frankl2012, Frankl2013, H.huang2012}. For more information about Tur\'{a}n number problems, recently, there are two excellent surveys \cite{Furedi2013, keevash2011}.

Motivated by the results of \cite{Bushwa2011,lidicky2013} and other related results, we study the Tur\'{a}n number $ex(n, F_m)$ for all integers $n$, especial for $n$ small. Our main results in this paper can be stated as follows.
\begin{theorem}\label{main}
Let $k_1\geq k_2\geq\ldots\geq k_{m}\geq3$ and $n\ge \sum_{i=1}^mk_i$. If there is at most one odd in $\{k_1,k_2,\ldots,k_{m}\}$, then
 $$ex(n,F_m)= \max\left\{[n,k_1,k_1],[n,k_1+k_2,k_2],\ldots,[n,\sum_{i=1}^{m}k_{i},k_{m}],[n,\sum_{i=1}^{m}k_{i}]\right\}.$$
Moreover, if $k_1,k_2,\ldots,k_{m}$ are  even, then the extremal graphs are characterized.
\end{theorem}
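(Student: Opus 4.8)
\noindent The plan is to prove matching lower and upper bounds, the upper bound by induction on $m$; throughout write $s_j=k_1+\dots+k_j$.

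\medskip
\noindent\emph{Lower bound.} For $1\le j\le m$ let $H_j=K_{s_j-1}\cup Ex(n-s_j+1,P_{k_j})$, and let $H_0=\overline{K}_{n-\lfloor s_m/2\rfloor+1}+K_{\lfloor s_m/2\rfloor-1}$; the identities recorded right after the Definition, together with Theorem~\ref{Pathk2}, give $e(H_j)=[n,s_j,k_j]$ and $e(H_0)=[n,s_m]$. Each $H_j$ is $F_m$-free: in any copy of $F_m$ each path lies in one component; the clique $K_{s_j-1}$ is too small to receive $P_{k_1}\cup\dots\cup P_{k_j}$ (which needs $s_j$ vertices), while every other component of $H_j$ is $P_{k_j}$-free and hence receives none of $P_{k_1},\dots,P_{k_j}$, so one of these $j$ paths has nowhere to go. And $H_0$ is $F_m$-free because a copy of $P_{k_i}$ in a join $\overline{K}_a+K_b$ uses at least $\lfloor k_i/2\rfloor$ vertices of the $K_b$-side, and $\sum_i\lfloor k_i/2\rfloor\ge\lfloor s_m/2\rfloor>\lfloor s_m/2\rfloor-1=b$ since at most one $k_i$ is odd. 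Hence $ex(n,F_m)$ is at least the claimed maximum.

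\medskip
\noindent\emph{Upper bound: the routine half.} Induct on $m$. For $m=1$, Theorem~\ref{Pathk2} gives $ex(n,P_{k_1})=[n,k_1,k_1]$, while $H_0$ (which is $P_{k_1}$-free) shows $[n,k_1]\le ex(n,P_{k_1})$; so the value equals $\max\{[n,k_1,k_1],[n,k_1]\}$. Let $m\ge2$, assume the theorem for $m-1$, and let $G$ be an edge-maximal $F_m$-free graph on $n\ge s_m$ vertices. If $G$ does not contain $P_{k_1}\cup\dots\cup P_{k_{m-1}}$ --- an $(m-1)$-path forest still obeying the parity hypothesis --- then the induction hypothesis gives $e(G)\le\max\{[n,s_1,k_1],\dots,[n,s_{m-1},k_{m-1}],[n,s_{m-1}]\}$, and since $[n,s_{m-1}]\le[n,s_m]$ for $n\ge s_m$ (a direct comparison, e.g.\ by moving vertices into the clique side of the join), $e(G)$ is at most the claimed maximum. (The subcase $P_{k_1}\not\subseteq G$ is contained here and, by Theorem~\ref{Pathk2}, simply yields the term $[n,k_1,k_1]$.)

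\medskip
\noindent\emph{Upper bound: the main case.} It remains to treat $G\supseteq P_{k_1}\cup\dots\cup P_{k_{m-1}}$. The plan is to choose vertex-disjoint paths $Q_1,\dots,Q_m$ in $G$ with $|Q_i|\le k_i$ for all $i$ and $\sum_i|Q_i|$ maximum; as $F_m\not\subseteq G$, the set $U=\bigcup_iV(Q_i)$ has $|U|\le s_m-1$, at least one $Q_j$ is deficient ($|Q_j|<k_j$), and $W=V(G)\setminus U$ is nonempty. The maximality of $\sum_i|Q_i|$, exploited through path rotations and extensions in the Erd\H{o}s--Gallai / Kopylov / Balister--Gy\H{o}ri--Lehel--Schelp tradition (cf.\ Theorems~\ref{Pathk1} and~\ref{conenctedpath}), should force (i) that $G[W]$ contains no path longer than the shortest deficient $Q_j$, so $e(G[W])$ is only a small linear term by Theorem~\ref{Pathk1}, and (ii) that all neighbours of $W$ inside $U$ lie in a ``core'' $D\subseteq U$ with $|D|\le\lfloor s_m/2\rfloor-1$. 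Granting this, write $e(G)=e(G[U])+e(U,W)+e(G[W])$ and split into two cases. If the core is small relative to $s_m$, then $G$ is essentially a clique on $U$ disjoint from the sparse $G[W]$, and combining $e(G[U])\le\binom{|U|}{2}$, the Faudree--Schelp bound on $e(G[W])$, and the elementary inequality $\binom{a}{2}+\binom{b}{2}\le\binom{a+b}{2}$ yields $e(G)\le[n,s_j,k_j]$ for a suitable $j$. If the core is large, then every vertex of $W$ --- and, by one more rotation argument, every vertex of $V(G)\setminus D$ --- sends its edges only into $D$ and $G-D$ is edgeless, so $e(G)\le\binom{|D|}{2}+|D|(n-|D|)\le[n,s_m]$; here the presence of an even $k_i$ (guaranteed since $m\ge2$) is precisely what forbids the one extra edge that could otherwise persist inside $V(G)\setminus D$. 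The genuinely hard point is claim (ii): proving, with no slack in the constant $\lfloor s_m/2\rfloor-1$, that the $W$-neighbourhoods within $U$ are confined to so small a set requires a delicate rotation analysis, showing that a vertex of $W$ adjacent to an endpoint or to a rotation-endpoint of some $Q_i$ would either lengthen a deficient path or create an extra disjoint path. Finally, when all $k_i$ are even the extremal graphs are identified by carrying the above estimates to equality --- the uniqueness clauses of Theorems~\ref{Pathk2} and~\ref{conenctedpath} pin down the $P_{k_j}$-free parts and the core --- and comparing the edge counts of the finitely many candidates that result.
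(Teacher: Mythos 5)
Your lower bound and the first half of the upper bound (inducting on $m$ to conclude that a counterexample $G$ must contain $P_{k_1}\cup\dots\cup P_{k_{m-1}}$) are correct and agree with the paper. But the ``main case'' is where the entire difficulty lives, and there you have only a plan, not a proof. Claims (i) and (ii) about the maximal disjoint path system $Q_1,\dots,Q_m$ are asserted to ``should follow'' from rotation arguments; claim (ii) in particular --- that all $U$-neighbours of $W$ lie in a core $D$ with $|D|\le\lfloor s_m/2\rfloor-1$, with no slack --- is exactly the kind of statement whose proof would constitute the theorem, and you explicitly flag it as the genuinely hard point without resolving it. It is not even clear the claim is true in the form stated (in the extremal graph $K_{s_j-1}\cup Ex(n-s_j+1,P_{k_j})$ the ``core'' picture does not obviously emerge from a maximal path system), and the case split ``core small vs.\ core large'' together with the parity remark about the one extra edge is description of the answer rather than derivation of it. So there is a genuine gap.

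It is worth noting that the paper avoids any new rotation analysis. Its key observation is that a \emph{connected} $F_m$-free graph is essentially a connected $P_k$-free graph for $k=\sum_i k_i$ (a connected graph containing $P_k$ contains $F_m$ by cutting the path into segments), so the connected case is dispatched entirely by the Balister--Gy\H{o}ri--Lehel--Schelp/Kopylov theorem (Theorem~\ref{conenctedpath}) plus a check that the known extremal graphs for $P_k$ are $F_m$-free under the parity hypothesis (Lemma~\ref{connected}). The disconnected case is then handled by grouping the components according to which pieces of $F_{m-1}$ they contain, bounding each component by the connected result, and summing via the superadditivity inequalities of Observations 1--7 (packaged as Lemma~\ref{km}). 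If you want to complete your argument along your own lines you would have to prove your claim (ii) from scratch; the far shorter route is the paper's reduction to the connected long-path theorem plus component-wise superadditivity.
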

 If there are two odds in $\{k_1, \ldots, k_m\}$, we have the following partial results.
\begin{theorem}\label{p2l+1,3}
Let $n\ge 2l+4$. Then
\begin{eqnarray*}
&&ex(n,P_{2l+1}\bigcup P_{3})=\max\left\{[n,2l+1,2l+1],[n,2l+4,3],[n,l]+1\right\}.
\end{eqnarray*}
Moreover, the extremal graphs are
$$Ex(n,P_{2l+1}),K_{2l+3}\bigcup M_{n-2l-3} \mbox{ and } K_l+\left(K_2\bigcup \overline{K}_{n-l-2}\right),$$
where $M_{n-2l-3}$ is a maximum matching with $n-2l-3$ vertices.
\end{theorem}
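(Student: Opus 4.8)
\medskip
\noindent\textbf{Proof idea.}
We may assume $l\ge 2$; for $l=1$ this is $ex(n,2\cdot P_3)$, which is already known. The plan is to combine the path theorems \ref{Pathk2} and \ref{conenctedpath} with an analysis of a longest path of the graph. For the lower bound we would verify that each of the three listed graphs is $P_{2l+1}\bigcup P_3$-free and compute its number of edges. An extremal graph for $P_{2l+1}$ contains no $P_{2l+1}$ at all, hence no $P_{2l+1}\bigcup P_3$, and has $[n,2l+1,2l+1]$ edges by Theorem \ref{Pathk2}. In $K_{2l+3}\bigcup M_{n-2l-3}$ the matching part contains neither a $P_3$ nor (since $l\ge 2$) a $P_{2l+1}$, so a $P_{2l+1}$ and a $P_3$ would both have to lie inside $K_{2l+3}$, which has only $2l+3$ vertices, one short of the $2l+4$ needed; here $e(K_{2l+3}\bigcup M_{n-2l-3})=[n,2l+4,3]$. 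In $K_l+(K_2\bigcup\overline{K}_{n-l-2})$, a short count of how the $l$ hub vertices may lie along a path shows that every copy of $P_{2l+1}$ uses all $l$ hubs while every copy of $P_3$ uses at least one hub, so the two cannot be vertex-disjoint; this graph realizes the third term of the maximum.

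For the upper bound, let $G$ be $P_{2l+1}\bigcup P_3$-free on $n\ge 2l+4$ vertices. If $G$ has no $P_{2l+1}$, Theorem \ref{Pathk2} gives $e(G)\le[n,2l+1,2l+1]$ with equality exactly for the extremal graphs of $P_{2l+1}$, and we are done. Otherwise $G$ contains $P_{2l+1}$; since a $P_{2l+4}$ splits into a $P_{2l+1}$ and a disjoint $P_3$, a longest path of $G$ has $p$ vertices with $2l+1\le p\le 2l+3$. Let $G_1$ be the component that contains a $P_{2l+1}$; every other component contains no $P_3$, hence is a $K_1$ or a $K_2$, so $e(G)\le e(G_1)+\lfloor(n-n_1)/2\rfloor$, where $n_1=|V(G_1)|$ and $G_1$ is connected, contains $P_{2l+1}$, and is $P_{2l+1}\bigcup P_3$-free. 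We then bound $e(G_1)$ according to the length $p$ of a longest path of $G_1$. If $p\le 2l+2$, then $G_1$ contains no $P_{2l+3}$ (and no $P_{2l+2}$ when $p=2l+1$), so Theorem \ref{conenctedpath} bounds $e(G_1)$; plugging this into $e(G)\le e(G_1)+\lfloor(n-n_1)/2\rfloor$ and using that the resulting bound is non-decreasing in $n_1$ for $l\ge 2$, we expect $p=2l+1$ never to reach the maximum and $p=2l+2$ to give $e(G)\le[n,l]+1$, with equality only when $n_1=n$, in which case the characterization in Theorem \ref{conenctedpath} forces $G=K_l+(K_2\bigcup\overline{K}_{n-l-2})$.

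The remaining case $p=2l+3$ is the crux, since Theorem \ref{conenctedpath} is now too weak: its extremal graphs do contain $P_{2l+1}\bigcup P_3$. Here we would fix a longest path $P=v_1v_2\cdots v_{2l+3}$ of $G_1$, put $C=V(P)$ and $R=V(G_1)\setminus C$, and exploit that $v_1\cdots v_{2l+1}$, $v_2\cdots v_{2l+2}$, $v_3\cdots v_{2l+3}$ are three copies of $P_{2l+1}$, so that every $P_3$ of $G_1$ must meet each of them. Combined with rotations of $P$ at its ends (every chord at $v_1$ or $v_{2l+3}$ produces another $P_{2l+3}$, and hence further copies of $P_{2l+1}$ that every $P_3$ must avoid), this should force a vertex of $R$ to have essentially no neighbour in $C$ and a single edge leaving $C$ already to make $G_1[C]$ far from complete. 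The conclusion we aim for is that either $R=\emptyset$ and $e(G_1)\le\binom{2l+3}{2}$ with equality iff $G_1=K_{2l+3}$, or else $e(G_1)$ drops strictly below the value needed; either way $e(G)\le\binom{2l+3}{2}+\lfloor(n-2l-3)/2\rfloor=[n,2l+4,3]$, with equality only for $G=K_{2l+3}\bigcup M_{n-2l-3}$. Comparing the three cases then yields the stated formula and the list of extremal graphs. The main obstacle is exactly this last case: controlling the edges between the long path and the rest of the graph, and ruling out near-complete $G_1[C]$, is where the real combinatorial work lies.
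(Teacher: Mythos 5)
Your overall architecture matches the paper's: lower bounds from the three constructions, reduction to one component containing $P_{2l+1}$ plus a matching outside it, Theorem~\ref{conenctedpath} (with $k=2l+3$) when the longest path has at most $2l+2$ vertices, and a direct analysis of a fixed $P_{2l+3}$ otherwise. Those parts are sound. But the case you yourself flag as the crux --- $G_1$ contains $P_{2l+3}=x_1\cdots x_{2l+3}$ and some vertex outside the path has neighbours on it --- is not carried out, and the target you set for it is not attainable. A vertex $y$ outside the path may have up to $l-1$ neighbours on it (it must avoid $x_1,x_2,x_{2l+2},x_{2l+3}$, any two consecutive $x_k,x_{k+1}$, and any pair $x_k,x_{k+4}$, but nothing forces its path-degree to $0$ or $1$), so a priori $e(G)$ can be of order $(l-1)n$, far above $[n,2l+4,3]=\binom{2l+3}{2}+\lfloor(n-2l-3)/2\rfloor\approx n/2$. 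Hence the conclusion ``either way $e(G)\le[n,2l+4,3]$'' cannot be the endpoint of this case; for large $n$ the quantity you must beat is $[n,l]+1\approx ln$, and that comparison is exactly where the combinatorial work lies.

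What is missing is the paper's pairing argument. Taking $y$ outside the path with the maximum number $s\ge1$ of neighbours $x_{i_1},\dots,x_{i_s}$ on it, one shows $i_{k+1}-i_k\neq4$ and that each of the pairs $\{x_{i_k-1},x_{i_k+2}\}$ and $\{x_{i_k-2},x_{i_k+1}\}$ has degree sum at most $2l+3$ inside the path; choosing these alternately produces $s$ pairwise disjoint such pairs, and together with $d(x_1)+d(x_{2l+2})\le 2l+1$ (forced via Lemma~\ref{circle}, since otherwise $G$ contains $C_{2l+2}$ or $C_{2l+3}$) this gives
$e(G)\le\binom{2l+3}{2}-\lceil(2s(l+1)+2l+3)/2\rceil+s(n-2l-3)+\lfloor(n-2l-3)/2\rfloor$.
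That is, each unit of path-degree of $y$ costs about $l+1$ edges inside the path, and only then does the total fall below $[n,2l+4,3]$ for $n\le3l+5$ and below $\binom{l}{2}+l(n-l)+1$ for $n\ge3l+6$ (checked for $l\ge3$; $l=2$ is handled separately as $P_5\bigcup P_3$). Your endpoint-rotation idea points in the right direction but does not by itself produce this per-neighbour loss, so as written the proposal has a genuine gap in its main case.
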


\begin{theorem}\label{p5,p5}
Let $n\ge 10$. Then
$$ex(n,P_{5}\bigcup P_{5})= \max\left\{[n,10,5],3n-5\right\}.$$
 Moreover, the extremal graphs are $K_9\bigcup Ex(n,P_5)$ and $K_3+(K_2\bigcup \overline{K}_{n-5})$.
\end{theorem}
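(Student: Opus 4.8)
The plan is to first dispose of the $P_5$-free case, then reduce to a connected extremal problem, and finally optimize. Let $G$ be an $n$-vertex $P_5\cup P_5$-free graph with the maximum number of edges. If $G$ contains no $P_5$ at all, then $e(G)\le ex(n,P_5)=[n,5,5]$ by Theorem~\ref{Pathk2}; comparing the floor expressions (a unit increase of the argument changes $[\,\cdot\,,5,5]$ by at most $3$, while $[n,10,5]-[n-9,5,5]=\binom{9}{2}=36$) gives $[n,5,5]<[n,10,5]$, so $G$ is not extremal here. So we may assume $G$ contains a $P_5$. The key first observation is that \emph{at most one component of $G$ contains a copy of $P_5$}: two copies in distinct components would be vertex-disjoint and form $P_5\cup P_5$. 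Hence $G=G_0\cup G'$, where $G_0$ is the unique component containing a $P_5$ (so $m:=|V(G_0)|\ge 5$) and every component of $G'$ is $P_5$-free; by Theorem~\ref{Pathk2}, $e(G')\le ex(n-m,P_5)=[n-m,5,5]$, with equality only when $G'=\left(\bigcup K_4\right)\cup K_r$.

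The technical heart is an estimate for connected graphs: if $H$ is a connected $P_5\cup P_5$-free graph on $m$ vertices, then $e(H)\le\binom{m}{2}$ when $m\le 9$, and $e(H)\le 3m-5$ when $m\ge 10$, with equality in the range $m\ge 14$ exactly for $H=K_3+\left(K_2\cup\overline{K}_{m-5}\right)$. For $m\le 9$ this is trivial, so assume $m\ge 10$ and take a longest path $P=v_1v_2\cdots v_t$ in $H$; since $H$ has no $P_5\cup P_5$, $t\le 9$. If $t\le 4$ then $H$ is connected and $P_5$-free, whence $e(H)\le m+1\le 3m-5$ by Theorem~\ref{conenctedpath}. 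For $5\le t\le 9$, put $U=V(H)\setminus V(P)$; a $P_5$ inside $U$ together with $v_1v_2v_3v_4v_5$ would give $P_5\cup P_5$, so $H[U]$ is $P_5$-free and $e(H[U])\le[m-t,5,5]$. It remains to bound $e(H[V(P)])$ and $e(V(P),U)$ and to show they cannot all be large at once. Rotation/extension arguments show no vertex of $U$ is adjacent to $v_1$ or $v_t$; inserting $u\in U$ at an internal vertex $v_i$ and reading off the two complementary sub-paths of $P$ shows $u$ can be adjacent to $v_i$ only for a very restricted set of indices $i$ (when $t=9$, essentially only $i=5$); and further rerouting shows that if many edges join $V(P)$ to $U$, or if $H[U]$ is near-extremal, then $H[V(P)]$ must be sparse. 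Carrying out this bookkeeping for each $t$ gives $e(H)\le 3m-5$, and tracing the equality case identifies $K_3+(K_2\cup\overline{K}_{m-5})$ (whose longest path has $8$ vertices, the remaining $m-8$ forming an independent set joined to a fixed triangle).

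Granting the estimate, the theorem follows from a short optimization. We have $e(G)=e(G_0)+e(G')\le f(m)+[n-m,5,5]$, where $f(m)$ denotes the connected bound. For $5\le m\le 9$ the function $m\mapsto\binom{m}{2}+[n-m,5,5]$ is strictly increasing, since increasing $m$ by $1$ adds $m\ge 5$ to the first summand and removes at most $3$ from the second; it is therefore maximized at $m=9$, giving $\binom{9}{2}+[n-9,5,5]=[n,10,5]$, attained precisely when $G_0=K_9$ and $G'=Ex(n-9,P_5)$, i.e. $G=K_9\cup Ex(n-9,P_5)$. For $m\ge 10$, using $[j,5,5]\le 3j$ (equality only at $j=0$) we get $f(m)+[n-m,5,5]\le(3m-5)+3(n-m)=3n-5$, with equality forcing $m=n$ and $G_0=K_3+(K_2\cup\overline{K}_{n-5})$. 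Hence $e(G)\le\max\{[n,10,5],3n-5\}$; and since for every $n\ge 10$ exactly one of these two numbers strictly exceeds the other (the crossover occurring between $n=17$ and $n=18$), for each fixed $n$ the extremal graph is the unique one of $K_9\cup Ex(n-9,P_5)$ and $K_3+(K_2\cup\overline{K}_{n-5})$ achieving the maximum. That both are $P_5\cup P_5$-free is immediate: the $K_9$ has too few vertices for two disjoint $P_5$'s and the rest of the first graph has no $P_5$ at all, while in the second graph every $P_5$ uses at least two vertices of the central triangle, so two disjoint copies would need at least four.

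The principal obstacle is the connected estimate, and inside it the cases $t\in\{7,8,9\}$: one must simultaneously control the edges inside a longest path, the edges from that path to the remainder, and the edges inside the $P_5$-free remainder, and show that a dense path, many edges leaving it, and a dense remainder cannot coexist without producing a second disjoint $P_5$. Turning this principle into the exact inequality $e(H)\le 3m-5$, together with the uniqueness of $K_3+(K_2\cup\overline{K}_{m-5})$, is a lengthy case analysis resting on a number of rerouting lemmas; by contrast the component reduction and the optimization above are routine.
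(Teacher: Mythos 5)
Your overall architecture --- reduce to a single component $G_0$ containing $P_5$, bound the connected case, then optimize $e(G_0)+e(G')$ over $|V(G_0)|$ using the fact that $G'$ is $P_5$-free --- is exactly the paper's (Lemma~\ref{conected,p5,p5} plus the short final argument), and your optimization for $5\le m\le 9$ and your verification of the two constructions are fine. But the proof has a genuine gap exactly where you locate ``the technical heart'': the connected estimate is asserted, not proved. The sentences ``rotation/extension arguments show\dots'', ``further rerouting shows\dots'' and ``carrying out this bookkeeping for each $t$ gives $e(H)\le 3m-5$'' stand in for what is, in the paper, the entire content of Lemma~\ref{conected,p5,p5}: one first extracts a $P_9$ (not merely a longest path of unknown length $t\le 9$) via Theorem~\ref{conenctedpath}, since $e(G)\ge\max\{[n,10,5],3n-5\}\ge ex_{con}(n,P_9)$ unless $G=(K_2\cup\overline K_{n-5})+K_3$, and then proves five separate ``Facts'' bounding $e(G[P_9])$ according to how the outside vertices attach. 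None of that is routine, and none of it is present in your write-up.

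Moreover, the connected bound you state is false as written. The graph $H=(K_7\cup\overline K_{m-8})+K_1$ (a $K_8$ together with $m-8$ pendant vertices all attached to one fixed vertex $w$ of it) is connected and $P_5\cup P_5$-free: each pendant has degree $1$, so a $P_5$ through a pendant must have it as an endpoint followed immediately by $w$; hence of two disjoint copies of $P_5$, one avoids $w$ and all pendants and uses $5$ vertices of the $K_7$, while the other contains at most one pendant and therefore at least $3$ further vertices of the $K_7$ --- $8>7$ vertices in all, a contradiction. This $H$ has $m+20$ edges, which exceeds your claimed bound $3m-5$ for $m=10,11,12$ (e.g.\ $30>25$ at $m=10$). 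The correct connected bound is $\max\{[m,10,5],3m-5\}$ (Lemma~\ref{conected,p5,p5}), and with it your $m\ge10$ branch of the optimization no longer follows from ``$[j,5,5]\le 3j$''; you would instead need $[m,10,5]+[n-m,5,5]\le[n,10,5]$, i.e.\ Observation 2, which is how the paper closes the disconnected case. So both the statement and the proof of your key lemma need repair before the rest of the argument is sound.
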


The rest of this paper is organized as follows. In Section 2, several technical Observations and Lemmas  are obtained.  In Section 3, the proofs of Theorem~\ref{main} and corollaries are presented.  Further, we partly confirm Bushaw and Kettle's conjecture and present two family graphs $F_m$ and $F_m^{\prime}$ such that $ex(n, F_m)=ex(n, F_m^{\prime})$ for all $n$.  In Sections 4 and 5, the proofs of Theorems~\ref{p2l+1,3} and \ref{p5,p5} are presented, respectively. In Section 6, a conjecture is proposed for the conclusion.

\section{Several Observations and Lemmas}

\subsection{Several Observations}
In order to prove Lemmas and main results, we need the following Observations. Let $k_1\geq k_2\geq k_m\ge 3$ be three positive integers with at most one odd. The following observations can be proved with the help of the extremal graphs of Theorems~\ref{Pathk2}, Theorem~\ref{conenctedpath} and some calculations, which are given in Appendix A.

{\bf Observation 1:} Let $n\ge k_1+k_m$. Then
\begin{eqnarray*}
&&\max\left\{{k_1+k_m-2 \choose 2}+n-k_1-k_m+2,[n,k_1+k_m]\right\}\\&\leq &\max\left\{[n,k_1+k_m,k_m],[n,k_1+k_m]\right\}.
\end{eqnarray*}

{\bf Observation 2:} Let $n_1\ge k_1$. Then $$[n_1,k_1+k_m,k_m]+[n_2,k_m,k_m]\leq[n_1+n_2,k_1+k_m,k_m].$$
 Moreover, if $n_1=k_1+t_1(k_m-1)+r_1,n_2=t_2(k_m-1)+r_2$, where $0\le r_1< k_m-1$ and $0\le r_2< k_m-1,$ then Observation 2 becomes equality  only when $r_1=0$ or $r_2=0$.

{\bf Observation 3:} Let $n_1 \ge k_1$ and $n_2\ge k_2$. Then $$[n_1,k_1+k_m,k_m]+[n_2,k_2+k_m,k_m]<[n_1+n_2,k_1+k_2+k_m,k_m].$$

{\bf Observation 4:} Let $n_1\ge k_1+k_m $ and $n_2\ge k_2+k_m$. Then $$[n_1,k_1+k_m]+[n_2,k_2+k_m]<[n_1+n_2,k_1+k_2+k_m].$$

{\bf Observation 5:} Let $n_1\ge k_1+k_m$. Then $$[n_1,k_1+k_m]+[n_2,k_m,k_m]<[n_1+n_2,k_1+k_m].$$

{\bf Observation 6:} Let $n_1\ge k_1+k_m$ and $n_2\ge k_2$.  Then $$[n_1,k_1+k_m]+[n_2,k_2+k_m,k_m]<[n_1+n_2,k_1+k_2+k_m].$$

{\bf Observation 7:} Let $n_1\ge k_1,n_2\ge k_2+k_m$. Then
 \begin{eqnarray*}
&& [n_1,k_1+k_m,k_m]+[n_2,k_2+k_m]\\&< &\max\left\{[n_1+n_2,k_1+k_2+k_m,k_m],[n_1+n_2,k_1+k_2+k_m]\right\}.
\end{eqnarray*}

\subsection{Several Lemmas}

\begin{lemma}\label{connected}
Let $G$ be a connected graph with $n$ vertices and $F_m=P_{k_1}\bigcup \ldots \bigcup P_{k_m}$, where $k_1\geq k_2\geq\ldots\geq k_{m}\geq3,$ $k=\sum_{i=1}^{m}k_{i}$, and $ m\geq2$.

 (1) If there are all even in $\{k_1,k_2,\ldots,k_m\}$, then $ex_{con}(n,F_m)=ex_{con}(n,P_{k})$. Moreover, the extremal graph is $Ex_{con}(n,P_k)$.

 (2) If there is exact one odd in  $\{k_1,k_2,\ldots,k_m\}$ with $k_m>3$, then $ex_{con}(n,F_m)=\max\{{k-2 \choose 2}+(n-k+2),[n,k]\}$.

 (3) If there are all even in $\{k_1,k_2,\ldots,k_{m-1}\}$ with $k_m=3$, then $ex_{con}(n,F_m)\leq ex_{con}(n,P_{k})-1=\max\{{k-2 \choose 2}+(n-k+2)-1,[n,k]\}$.
\end{lemma}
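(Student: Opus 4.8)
The plan is to reduce everything to the known results on paths, Theorems~\ref{Pathk2} and \ref{conenctedpath}. The one decisive (and trivial) observation is that the single path $P_k$ with $k=\sum_{i=1}^m k_i$ already contains $F_m$ as a subgraph: cut the $k$ consecutive vertices of $P_k$ into consecutive blocks of sizes $k_1,\dots,k_m$. Hence every $F_m$-free graph is $P_k$-free, which immediately yields $ex_{con}(n,F_m)\le ex_{con}(n,P_k)$ in all three parts; and, more importantly, a connected $F_m$-free graph with $ex_{con}(n,P_k)$ edges is a connected $P_k$-free graph with that many edges, so by Theorem~\ref{conenctedpath} it is isomorphic to one of $H_1:=(K_{k-3}\cup\overline{K}_{n-k+2})+K_1$ or $H_2:=(K_i\cup\overline{K}_{n-\lfloor(k+1)/2\rfloor})+K_{\lfloor k/2\rfloor-1}$, where $i=1$ if $k$ is even and $i=2$ if $k$ is odd. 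Thus the lemma reduces to deciding, for each parity pattern in (1)--(3), whether $H_1$ and $H_2$ contain $F_m$, and to supplying matching $F_m$-free constructions. (Throughout I assume $n\ge k$; the range $n<k$ is vacuous.)

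The first sub-task is to decide when $H_1$ contains $F_m$. In $H_1$ the set $\overline{K}_{n-k+2}$ is independent and each of its vertices is adjacent only to the apex, and the apex lies on at most one path of any copy of $F_m$; hence at most one of the $m$ paths can touch $\overline{K}_{n-k+2}$, while the remaining $m-1$ paths lie entirely inside the clique $K_{k-3}$. Counting the $k$ vertices of $F_m$ as (at most $2$ from $\overline{K}_{n-k+2}$)$\,+\,$(at most $k-3$ from $K_{k-3}$)$\,+\,$(at most one apex)$\,\le k$ forces equality throughout; in particular the distinguished path uses exactly two vertices of $\overline{K}_{n-k+2}$ and is therefore a $P_3$ of the form leaf--apex--leaf, and the other paths occupy all of $K_{k-3}$, which is possible precisely because they have $k-3$ vertices in total. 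Hence $H_1$ contains $F_m$ if and only if some $k_i$ equals $3$; so $H_1$ is $F_m$-free in cases (1) and (2) (where $k_m\ge4$) and contains $F_m$ in case (3).

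For $H_2$ I use that in any graph $\overline{K}_s+K_t$ a copy of $P_{k'}$ must use at least $\lfloor k'/2\rfloor$ vertices of $K_t$ (no two consecutive vertices of the path lie together in the independent set), together with the identity $\sum_{i=1}^m\lfloor k_i/2\rfloor=\lfloor k/2\rfloor$, which is exactly where the hypothesis ``at most one $k_i$ odd'' is spent. If $k$ is even ($i=1$) then $H_2$ has only $\lfloor k/2\rfloor-1$ dominating vertices, so $H_2$ --- and likewise the graph $\overline{K}_{n-\lfloor k/2\rfloor+1}+K_{\lfloor k/2\rfloor-1}$, which has $[n,k]$ edges --- is $F_m$-free; with the $F_m$-freeness of $H_1$ this settles part (1): both extremal graphs of Theorem~\ref{conenctedpath} survive, so $ex_{con}(n,F_m)=ex_{con}(n,P_k)$ with extremal graph $Ex_{con}(n,P_k)$. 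If $k$ is odd ($i=2$) then the extra $K_2$ of $H_2$ lets one chosen even path (one exists, since $m\ge2$ and at most one $k_i$ is odd) begin with the edge of $K_2$ and then alternate dominating-vertex/leaf, which spends one dominating vertex fewer than a leaf-to-leaf path of the same length; carrying out the other paths by the usual alternation brings the total demand on dominating vertices down to exactly $\lfloor k/2\rfloor-1$, so $F_m\subseteq H_2$. Assembling: in cases (2) and (3), whenever $[n,k]+1>\binom{k-2}{2}+(n-k+2)$ the unique connected $P_k$-extremal graph is $H_2$, which contains $F_m$, so by Theorem~\ref{conenctedpath} no connected $F_m$-free graph reaches $[n,k]+1$ edges. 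In case (2) this, together with the $F_m$-free connected constructions $H_1$ and $\overline{K}_{n-\lfloor k/2\rfloor+1}+K_{\lfloor k/2\rfloor-1}$ realizing $\binom{k-2}{2}+(n-k+2)$ and $[n,k]$ from below, gives $ex_{con}(n,F_m)=\max\{\binom{k-2}{2}+(n-k+2),[n,k]\}$. In case (3), since $H_1$ \emph{also} contains $F_m$, no connected $F_m$-free graph attains $ex_{con}(n,P_k)$ edges at all, so $ex_{con}(n,F_m)\le ex_{con}(n,P_k)-1$, which simplifies to $\max\{\binom{k-2}{2}+(n-k+2)-1,[n,k]\}$.

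I expect the two containment analyses to be the only real work --- in particular pinning down the exact number of dominating vertices that $F_m$ forces inside $H_2$, since the single-vertex saving one even path gets from the terminal $K_2$ in the odd case is precisely what produces the off-by-one losses in parts (2) and (3), and this integrality has to be checked path-by-path rather than in aggregate. The parity hypothesis enters only through $\sum_i\lfloor k_i/2\rfloor=\lfloor k/2\rfloor$; with two or more odd $k_i$ these path-versus-clique inequalities no longer sum to the clean bounds and a separate argument would be required.
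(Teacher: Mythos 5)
Your proof is correct and follows essentially the same route as the paper: reduce to Theorem~\ref{conenctedpath} via the observation that $F_m\subseteq P_k$, then decide which of the two connected $P_k$-extremal graphs contain $F_m$ in each parity case. The only difference is that you work out in detail the containment checks (the $\lfloor k'/2\rfloor$ dominating-vertex count for $H_2$ and the vertex count for $H_1$) that the paper dismisses as ``easy to see,'' which is a welcome addition rather than a deviation.
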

\begin{proof}
(1) If there are all even in $\{k_1,k_2,\ldots,k_m\}$,  by Theorem~\ref{conenctedpath}, it is easy to see that  $Ex_{con}(n,P_{k})$  contains no $F_m$, which implies that $ex(n, F_m)\ge ex_{con}(n,P_{k}).$  On the other hand, since a graph $G$ of order $n$ with $ex_{con}(n,P_{k})+1$ must contains $P_{k}$, so $G$ must contain $F_m$. Hence the assertion holds. Moreover, it is obviously that the extremal graph is $Ex_{con}(n,P_{k})$.

(2) If there is exact one odd in  $\{k_1,k_2,\ldots,k_m\}$  with $k_m>3$, let $G_1=(\overline{K}_{n-\lfloor\frac{k}{2}\rfloor+1})+K_{\lfloor\frac{k}{2}\rfloor-1}$ and $G_2=K_{k+3}\bigcup\overline{K}_{n-k+2}+K_1$.  Since both of $G_1$ and $G_2$ contain no $F_m$  with $e(G_1)=[n,k]$ and $e(G_2)={k-2 \choose 2}+(n-k+2)$,  $ex_{con}(n,F_m)\ge \max\{{k-2 \choose 2}+(n-k+2),[n,k]\}$. On the other hand,  let $G$ be any graph with
$e(G)\ge \max\{{k-2 \choose 2}+(n-k+2),[n,k]\}+1$.
Then
\begin{eqnarray*}
e(G)&\geq&
 \max\left\{{k-2 \choose 2}+(n-k+2)+1,[n,k]+1\right\}\\
 &\geq& ex_{con}(n,P_k)\\
 &=& \max\left\{{k-2 \choose 2}+(n-k+2),[n,k]+1\right\},
\end{eqnarray*}
since there is exactly one odd in  $\{k_1,k_2,\ldots,k_m\}$.
If $e(G)> ex(n, P_k)$, then   $G$ contains $P_k$, i.e., $F_m$, by Theorem~\ref{conenctedpath}. If $e(G)=ex(n, P_k)$, then
 \begin{eqnarray*}
 e(G)&=& \max\left\{{k-2 \choose 2}+(n-k+2)+1,[n,k]+1\right\}\\
 &=&\max\left\{{k-2 \choose 2}+(n-k+2),[n,k]+1\right\}.
  \end{eqnarray*}
Hence  $e(G)=[n,k]+1$ and $n>\frac{5k-7}{4}$. By Theorem~\ref{conenctedpath},  either $G$ contains $P_k$, or $G=(K_{2}\bigcup\overline{K}_{n-\lfloor\frac{k+1}{2}\rfloor})
+K_{\lfloor\frac{k}{2}\rfloor-1}$, which contains $F_{m}$.  The assertion holds.

  (3) If $k_1,k_2,\ldots,k_{m-1}$ are all even and $k_m=3$,
   let $G$ be any graph with $e(G)\ge  ex_{con}(n,P_{k})$. If $e(G)>ex_{con}(n,P_{k})$, then $G$ contains $P_{k}$, i.e., $F_m$, by Theorem~\ref{conenctedpath}. If $e(G)=ex_{con}(n,P_{k})$ and $G$ contains $P_{k}$, then $G$ contains $F_{m}$, if $e(G)=ex_{con}(n,P_{k})$ and $G$ does not contain $P_{k}$, then  by Theorem~\ref{conenctedpath}, $G$ is either $(K_{k-3}\bigcup\overline{K}_{n-k+2})+K_{1}$ or $(K_{2}\bigcup\overline{K}_{n-\lfloor\frac{k+1}{2}\rfloor})+K_{\lfloor\frac{k}{2}\rfloor-1}$, both contain $F_m$. So the assertion holds.
    \end{proof}

\begin{remark}
(1) Let $k$ be an odd number. By a simple calculation, $[n,k]>{k-2 \choose 2}+(n-k+2)$ for $n>\frac{5k-7}{4}+\frac{2}{k-5}$;    $[n,k]<{k-2 \choose 2}+(n-k+2)$ for $n<\frac{5k-7}{4}+\frac{2}{k-5}$; and  $[n,k]={k-2 \choose 2}+(n-k+2)$ for $n=\frac{5k-7}{4}+\frac{2}{k-5}$.
(2) If there is  exact one odd in $\{k_1,k_2,\ldots,k_m\}$, it is interesting to determine $ex_{con}(n, F_m)$ and $Ex_{con}(n,F_m)$.
\end{remark}

\begin{lemma}\label{Pk1Pk2}
Let $G$ be a graph with $n$ vertices. If $k_1\geq k_2\geq 3,n\ge k_1+k_2$, $k_1,k_2$ are not both odd, then
$$ex(n,P_{k_1}\bigcup P_{k_2})= \max\left\{[n,k_{1},k_{1}],[n,k_{1}+k_{2},k_{2}],[n,k_{1}+k_{2}]\right\}.$$
Moreover, if $k_1,k_2$ are both even, then the extremal graphs are $$Ex(n,P_{k_1}), \ K_{k_1+k_2-1}\bigcup Ex(n-k_1-k_2+1,P_{k_2}),\  K_{\frac{k_1+k_2}{2}-1}+\overline{K}_{n-\frac{k_1+k_2}{2}+1}.$$
\end{lemma}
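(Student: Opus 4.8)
The plan is to prove the stated equality by matching a lower bound coming from three explicit constructions against an upper bound obtained from a component decomposition, and then to recover the extremal graphs by tracking the equality cases of the upper bound. Write $F=P_{k_1}\bigcup P_{k_2}$ and $M=\max\{[n,k_1,k_1],[n,k_1+k_2,k_2],[n,k_1+k_2]\}$. For the lower bound I would check that $Ex(n,P_{k_1})$, $K_{k_1+k_2-1}\bigcup Ex(n-k_1-k_2+1,P_{k_2})$ and $K_{\lfloor(k_1+k_2)/2\rfloor-1}+\overline{K}_{n-\lfloor(k_1+k_2)/2\rfloor+1}$ are $F$-free with $[n,k_1,k_1]$, $[n,k_1+k_2,k_2]$ and $[n,k_1+k_2]$ edges respectively (the edge counts are exactly the identities recorded just after the Definition and Theorem~\ref{Pathk2}). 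Freeness of the first graph is immediate; for the second, two vertex-disjoint paths on $k_1+k_2$ vertices cannot both lie inside $K_{k_1+k_2-1}$, while a path inside $Ex(n-k_1-k_2+1,P_{k_2})$ — which has no $P_{k_2}$, hence no $P_{k_1}$ as $k_1\ge k_2$ — is impossible; for the third, a path using $a$ vertices of the clique side $K_s$ has at most $2a+1$ vertices, so two disjoint paths use at most $2s+2\le k_1+k_2$ vertices, and in the all-even case equality would force the two clique parts to have sizes at least $k_1/2$ and $k_2/2$, a contradiction. Hence $ex(n,F)\ge M$.

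For the upper bound let $G$ be an $F$-free graph on $n$ vertices. If $G$ has no $P_{k_1}$ then $e(G)\le ex(n,P_{k_1})=[n,k_1,k_1]\le M$ by Theorem~\ref{Pathk2}. Otherwise some component $H$, on $n_1\ge k_1$ vertices, contains $P_{k_1}$, and I would split into three cases. If $H=G$, then $G$ is connected, and applying the appropriate case of Lemma~\ref{connected} (according to the parities of $k_1,k_2$ and to whether $k_2=3$) followed by Observation~1 gives $e(G)\le\max\{[n,k_1+k_2,k_2],[n,k_1+k_2]\}\le M$. If $H\ne G$, set $G'=G-V(H)$, $n_2=n-n_1\ge1$; since $G$ is $F$-free and $P_{k_1}\subseteq H$, $G'$ has no $P_{k_2}$, so $e(G')\le[n_2,k_2,k_2]$ by Theorem~\ref{Pathk2}. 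If $n_1\ge k_1+k_2$, then $H$ is connected and $F$-free, so $e(H)\le\max\{[n_1,k_1+k_2,k_2],[n_1,k_1+k_2]\}$ as above, and Observation~2 (respectively Observation~5) gives $e(G)=e(H)+e(G')\le[n,k_1+k_2,k_2]$ (respectively $<[n,k_1+k_2]$), hence $\le M$. If $k_1\le n_1\le k_1+k_2-1$, then $e(H)\le{n_1\choose 2}$, and writing $n_1=k_1+s$ I would observe that $g(s):={k_1+s\choose 2}+ex(n-k_1-s,P_{k_2})$ is strictly increasing on $0\le s\le k_2-1$: indeed ${k_1+s+1\choose 2}-{k_1+s\choose 2}=k_1+s\ge k_1$, while $ex(m,P_{k_2})-ex(m-1,P_{k_2})\le k_2-2<k_1$ for every $m$ (delete a minimum-degree vertex of the extremal graph; its degree is at most the average degree, which is at most $k_2-2$ by Theorem~\ref{Pathk1}). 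Thus $e(G)\le g(s)\le g(k_2-1)=[n,k_1+k_2,k_2]\le M$.

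For the extremal graphs when $k_1,k_2$ are both even I would trace the equalities through the cases above. If $e(G)=M$ and $G$ has no $P_{k_1}$, then $G=Ex(n,P_{k_1})$ by Theorem~\ref{Pathk2}. In the connected case, equality forces $G=Ex_{con}(n,P_{k_1+k_2})$; using ${k_1+k_2-1\choose 2}={k_1+k_2-2\choose 2}+(k_1+k_2-2)$ and $ex(m,P_{k_2})\ge m-1$ (valid since $k_2\ge4$) one gets ${k_1+k_2-2\choose 2}+(n-k_1-k_2+2)<[n,k_1+k_2,k_2]$, so the Kopylov graph $(K_{k_1+k_2-3}\bigcup\overline{K}_{n-k_1-k_2+2})+K_1$ is never $F$-extremal and the surviving connected possibility is $K_{(k_1+k_2)/2-1}+\overline{K}_{n-(k_1+k_2)/2+1}$, attained exactly when $[n,k_1+k_2]=M$. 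In the case $n_1\ge k_1+k_2$, the same inequality shows no connected $F$-free graph on $n_1$ vertices reaches $[n_1,k_1+k_2,k_2]$, and $[n_1,k_1+k_2]=[n_1,k_1+k_2,k_2]$ is excluded by the strictness in Observation~5 (recall $n_2\ge1$), so this case contributes nothing new. In the case $k_1\le n_1\le k_1+k_2-1$, strict monotonicity of $g$ forces $s=k_2-1$, $e(H)={k_1+k_2-1\choose 2}$ and $e(G')=ex(n_2,P_{k_2})$, that is $H=K_{k_1+k_2-1}$ and $G'=Ex(n-k_1-k_2+1,P_{k_2})$, attained exactly when $[n,k_1+k_2,k_2]=M$. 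Collecting the three, the $F$-extremal graphs are precisely those among $Ex(n,P_{k_1})$, $K_{k_1+k_2-1}\bigcup Ex(n-k_1-k_2+1,P_{k_2})$ and $K_{(k_1+k_2)/2-1}+\overline{K}_{n-(k_1+k_2)/2+1}$ whose number of edges equals $M$ (the first two standing for all Faudree--Schelp extremal graphs of the relevant order and length).

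The value itself is essentially routine once Lemma~\ref{connected} and the Observations are in place; the delicate part, which I expect to be the main obstacle, is the extremal-graph characterization. One must dispose of the ``near-miss'' Kopylov graph $(K_{k_1+k_2-3}\bigcup\overline{K}_{n-k_1-k_2+2})+K_1$, invoke the precise equality conditions of Observations~2 and~5 (which only hold for particular residues $r$ in the definition of $[\,\cdot\,]$), and carefully handle the non-uniqueness of the Faudree--Schelp graphs $Ex(\cdot,P_{k_1})$ and $Ex(\cdot,P_{k_2})$ as well as possible ties among the three terms of $M$.
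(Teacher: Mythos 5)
Your proposal is correct and follows essentially the same route as the paper: the same split into (i) no $P_{k_1}$, (ii) connected, handled via Lemma~\ref{connected} and Observation~1, and (iii) a component $H\supseteq P_{k_1}$ with $G-V(H)$ being $P_{k_2}$-free, handled via Theorem~\ref{Pathk2} and Observations~2 and~5 according to whether $|V(H)|\ge k_1+k_2$. The only differences are that you supply details the paper treats as immediate — the $F$-freeness of the three constructions, the monotonicity of $g(s)$ in the case $k_1\le n_1\le k_1+k_2-1$, and the equality bookkeeping for the extremal graphs — all of which check out.
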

\begin{proof}
Let $G$ be any graph which does not contain $P_{k_1}\bigcup P_{k_2}$. Suppose that $e(G)> max\{[n,k_{1},k_{1}],[n,k_{1}+k_{2},k_{2}],[n,k_{1}+k_{2}]\}$. We consider the following two cases.

 {\bf Case 1.} $G$ is connected. Then  by Lemma~\ref{connected} and Theorem~\ref{conenctedpath},
\begin{eqnarray*}
e(G)&\leq& ex_{con}(n,P_{k_1}\bigcup P_{k_2})\\&\leq &\max\{{k_1+k_2-2 \choose 2}+(n-k_1-k_2+2),[n,k_{1}+k_{2}]\}\\&\leq & \max\{[n,k_{1}+k_{2},k_{2}],[n,k_{1}+k_{2}]\},
 \end{eqnarray*}
 the last inequality follows from Observation 1. This is a contradiction.

 {\bf Case 2.}  $G$ is disconnected. Since $e(G)>[n,k_1,k_1], $  $G$ must contain $P_{k_1}$ by theorem~\ref{Pathk2}.  Let $C$ be the component with $n_1\ge k_1$ vertices which contains $P_{k_1}$. Thus $C$ does not contain $P_{k_1}\bigcup P_{k_2}$ and $G-C$ does not contain $P_{k_2}$. Let $n_2=n-n_1$.  If $n_1\ge k_1+k_2$, then by Theorem~\ref{Pathk2} and Lemma~\ref{connected},
 \begin{eqnarray*}
  e(G)&\leq& ex_{con}(n_1,P_{k_1}\bigcup P_{k_2})+ex(n_2,P_{k_2})\\
  &\leq& \max\left\{ {k_1+k_2-2 \choose 2}+n_1-k_1-k_2+2,[n_1,k_1+k_2]\right\}+ [n_2,k_2,k_2]\\
  &< &\max\{[n,k_{1}+k_{2},k_{2}],[n,k_{1}+k_{2}]\}.
  \end{eqnarray*}
  The last inequality follows from the fact: (1) If ${k_1+k_2-2 \choose 2}+n_1-k_1-k_2+2\geq[n_1,k_1+k_2]$, then $n_1\leq\frac{5(k_1+k_2)-7}{4}+\frac{2}{k_1+k_2-5}$ for  $k_1+k_2$ being odd, and $n_1\leq\frac{5(k_1+k_2)-10}{4}$ for $k_1+k_2$ being even, which implies ${k_1+k_2-2 \choose 2}+n_1-k_1-k_2+2+[n_2,k_2,k_2]<{k_1+k_2-1 \choose 2}+[n_2,k_2,k_2]\leq [n,k_{1}+k_{2},k_{2}].$   (2) If ${k_1+k_2-2 \choose 2}+n_1-k_1-k_2+2\leq[n_1,k_1+k_2]$ then $[n_1,k_1+k_2]+[n_2,k_2,k_2]<[n,k_{1}+k_{2}]$ follows from observation 5. This is also a contradiction.  If $k_1\le n_1<k_1+k_2$, then
   \begin{eqnarray*}
  e(G)&\leq& ex_{con}(n_1,P_{k_1}\bigcup P_{k_2})+ex(n_2,P_{k_2})\\&\leq& {n_1 \choose 2}+ [n_2,k_2,k_2]\\
  &\le&\max\left\{[n,k_{1}+k_{2},k_{2}],[n,k_{1}+k_{2}]\right\},
  \end{eqnarray*}
   with the equality holds when $G=K_{k_1+k_2-1}\bigcup Ex(n-k_1-k_2+1,P_{k_2})$. This is a contradiction. Hence the assertion holds. Moreover, it's obviously that if $k_1,k_2$ are both even, then the extremal graphs are determined and we finish our proof.
  \end{proof}

\begin{remark}
If $k_1\leq 5 k_2$, it is easy to see that
 $$ex(n,P_{k_1}\bigcup P_{k_2})= \max\left\{[n,k_{1}+k_{2},k_{2}],[n,k_{1}+k_{2}]\right\} \mbox{ for } k_1\leq 5k_2.$$
\end{remark}

\begin{lemma}\label{km}
Let $n\ge\sum_{i=1}^{s}n_i,n_{i}\geq l_{i,1}+l_{i,2}+\ldots+l_{i,t_{i}}$, $l_{i,1}\geq l_{i,2}\geq \ldots \geq l_{i,t_{i}}\geq k_m,i\in\{1,2,\ldots,s\}$. If there is at most one odd in $\{l_{1,1},\ldots,l_{1,t_{1}},\ldots,\ldots,l_{s,1},\ldots,l_{s,t_{s}},k_m\}$,
then
 \begin{eqnarray*}
\sum_{i=1}^{s}ex_{con}(n_i,P_{l_{i,1}}\bigcup\ldots\bigcup P_{l_{i,t_{i}}}\bigcup P_{k_{m}})+ex(n-\sum_{i=1}^{s}n_i,P_{k_m})\\ \leq \max\left\{[n,\sum_{i=1}^{s}\sum_{j=1}^{t_i}l_{i,j}+k_{m},k_m],
[n,\sum_{i=1}^{s}\sum_{j=1}^{t_i}l_{i,j}+k_{m}]\right\}.
\end{eqnarray*}
\end{lemma}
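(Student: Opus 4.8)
The plan is to prove the bound by induction on $s$, using the single-component estimate (Lemma~\ref{connected} together with Theorem~\ref{conenctedpath}) to control each of the $s$ connected pieces, and then the seven Observations to merge the resulting $[\,\cdot\,]$-quantities one at a time. First I would record the base case $s=1$: here the left-hand side is $ex_{con}(n_1,P_{l_{1,1}}\bigcup\cdots\bigcup P_{l_{1,t_1}}\bigcup P_{k_m})+ex(n-n_1,P_{k_m})$, and by Lemma~\ref{connected} the first term is at most $\max\{\binom{L-2}{2}+(n_1-L+2),[n_1,L]\}$ where $L=\sum_j l_{1,j}+k_m$ (with a possible $-1$ when a path of length $3$ is present, which only helps), while the second term is $[\,n-n_1,k_m,k_m\,]$ by Theorem~\ref{Pathk2}. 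Splitting into the two cases according to which term achieves the maximum, the case $\binom{L-2}{2}+(n_1-L+2)\ge[n_1,L]$ is handled exactly as in the proof of Lemma~\ref{Pk1Pk2}: the hypothesis forces $n_1$ to be small enough that $\binom{L-2}{2}+(n_1-L+2)+[n-n_1,k_m,k_m]<\binom{L-1}{2}+[n-n_1,k_m,k_m]\le[n,L,k_m]$, using Observation~2 to absorb the $K_{L-1}$-block together with the union of $K_{k_m-1}$'s. The case $[n_1,L]\ge\binom{L-2}{2}+(n_1-L+2)$ is precisely Observation~5, which gives $[n_1,L]+[n-n_1,k_m,k_m]<[n,L]$. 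So the base case follows.

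For the inductive step, assume the statement for $s-1$ and write $N'=n-n_s$, $L_s=\sum_{j=1}^{t_s}l_{s,j}+k_m$, and $L'=\sum_{i=1}^{s-1}\sum_{j=1}^{t_i}l_{i,j}+k_m$. I would first apply the $s=1$ case to the $s$-th piece \emph{relative to the leftover vertices}: that is, bound $ex_{con}(n_s,\,\cdots)+ex(n-\sum_i n_i,P_{k_m})$ — but this needs care, since the ``$+ex(\cdot,P_{k_m})$'' term should be attached only once. The cleaner route: bound $\sum_{i=1}^{s-1}ex_{con}(n_i,\cdots)+ex(N'-\sum_{i<s}n_i,P_{k_m})$ by the induction hypothesis, getting $\max\{[N',L',k_m],[N',L']\}$, and separately bound $ex_{con}(n_s,\cdots)$ alone by $\max\{\binom{L_s-2}{2}+(n_s-L_s+2),[n_s,L_s]\}$ — note there is no trailing $P_{k_m}$-term here, which is why I must be careful about where the single copy of $ex(n-\sum n_i,P_{k_m})$ lands; I would fold it into the induction hypothesis applied to the first $s-1$ pieces, which is legitimate because that hypothesis already contains exactly one such term. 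Now I have four combinations to merge — $[N',L',k_m]$ or $[N',L']$ from the inductive part, against $\binom{L_s-2}{2}+(n_s-L_s+2)$ or $[n_s,L_s]$ from the new piece. Each combination is dispatched by one Observation: the two where the new piece contributes a $[\,\cdot\,]$-term are Observations~3, 4, 6 and 7 (connected-plus-connected mergers), and the two where it contributes a clique $\binom{L_s-2}{2}+(n_s-L_s+2)$ are handled by first bounding $\binom{L_s-2}{2}+(n_s-L_s+2)<\binom{L_s-1}{2}=[\,L_s-1,\,*,\,*\,]$-type quantity and then invoking Observation~2 (or its analogue) to glue $K_{L_s-1}$ onto the left block; a short check that $\binom{L_s-1}{2}\le[\,\text{(block of size }L_s-1),L',k_m\,]$ closes it, again using $n_s$ small in the relevant case.

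The main obstacle, and the step deserving the most attention, is the bookkeeping around the single additive term $ex(n-\sum_{i=1}^s n_i,P_{k_m})$: it must be threaded through the induction so that it is counted exactly once and always as a disjoint union of $K_{k_m-1}$'s (plus a $K_r$), so that Observations~2 and~5 apply verbatim. A secondary subtlety is the parity hypothesis: throughout I am using that at most one of all the $l_{i,j}$ and $k_m$ is odd, which is exactly what guarantees that each $ex_{con}$ estimate has the clean form of Lemma~\ref{connected}(1)--(2) (rather than the messier ``one extra edge'' situation), and that the target maxima on the right are the correct ones; I would state this reduction explicitly before the induction. Everything else is the routine case analysis already rehearsed in Lemma~\ref{Pk1Pk2}, now iterated, and the inequalities needed are precisely Observations~1--7, whose proofs are deferred to Appendix~A.
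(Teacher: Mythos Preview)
Your approach is correct and uses the same ingredients as the paper---Lemma~\ref{connected} for each connected piece, Theorem~\ref{Pathk2} for the leftover, and the Observations to merge---but the organization differs in two respects.

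First, the paper does not induct on $s$. Instead it applies Observation~1 \emph{immediately} to each summand, replacing $\max\bigl\{\binom{L_i-2}{2}+(n_i-L_i+2),\,[n_i,L_i]\bigr\}$ by $\max\bigl\{[n_i,L_i,k_m],\,[n_i,L_i]\bigr\}$. This eliminates your ``clique term'' case entirely, so the subsequent merging is a uniform pairwise collapse of two-term maxima via Observations~3, 4, 6, 7, yielding a single $\max\bigl\{[\sum n_i,\,\cdot\,,k_m],\,[\sum n_i,\,\cdot\,]\bigr\}$; only at the very end is the leftover $[n-\sum n_i,k_m,k_m]$ absorbed via Observations~2 and~5. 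You instead fold the leftover into the first $s-1$ pieces through the inductive hypothesis and append the $s$-th piece last---the reverse order, but the same destination.

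Second, the place your write-up is loosest is exactly the step Observation~1 is designed to bypass: your direct handling of $\binom{L_s-2}{2}+(n_s-L_s+2)$ by bounding it below $\binom{L_s-1}{2}$ and then ``invoking Observation~2 (or its analogue)'' does go through, but it is Observation~3 (not~2) that is needed to increase the middle parameter, together with monotonicity of $[n,m,l]$ in $n$ to pass from $N'+(L_s-1)$ back up to $N'+n_s$. Applying Observation~1 at the outset, as the paper does, makes this entire detour unnecessary and shortens the argument considerably.
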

\begin{proof}
By Lemma~\ref{connected} and Observation 1,
\begin{eqnarray*}
&&ex_{con}(n_i,P_{l_{i,1}}\bigcup\ldots\bigcup P_{l_{i,t_{i}}}\bigcup P_{k_{m}})\\& \le&
\max\left\{[n_i,\sum_{j=1}^{t_i}l_{i,j}+k_{m},k_m],[n_i,\sum_{j=1}^{t_i}l_{i,j}+k_{m}]\right\}.
\end{eqnarray*}
By Observations 3, 4, 6 and 7, we have
\begin{eqnarray*}
&&\max\left\{[n_1,\sum_{j=1}^{t_1}l_{1,j}+k_{m},k_m],[n_1,\sum_{j=1}^{t_1}l_{1,j}+k_{m}]\right\}\\&&+
\max\left\{[n_2,\sum_{j=1}^{t_2}l_{2,j}+k_{m},k_m],[n_2,\sum_{j=1}^{t_2}l_{2,j}+k_{m}]\right\}\\
&< & \max\left\{[n_1+n_2, \sum_{i=1}^2\sum_{j=1}^{t_i}l_{i,j}+k_{m}, k_m], [n_1+n_2,\sum_{i=1}^2\sum_{j=1}^{t_i}l_{i,j}+k_{m}]\right\}.
\end{eqnarray*}
Hence by Theorem~\ref{Pathk2}, we have
 \begin{eqnarray*}
&&\sum_{i=1}^{s}ex_{con}(n_i,P_{l_{i,1}}\bigcup\ldots\bigcup P_{l_{i,t_{i}}}\bigcup P_{k_{m}})+ex(n-\sum_{i=1}^{s}n_i,P_{k_m})\\
&\leq&\sum_{i=1}^{s} \max\left\{[n_i,\sum_{j=1}^{t_i}l_{i,j}+k_{m},k_m],[n_i,\sum_{j=1}^{t_i}l_{i,j}+k_{m}]\right\}+[n-\sum_{i=1}^{s}n_i,k_m,k_m]\\
&\leq & \max  \left\{
[\sum_{i=1}^sn_i,\sum_{i=1}^{s}\sum_{j=1}^{t_i}l_{i,j}+k_{m},k_m],
[\sum_{i=1}^sn_i,\sum_{i=1}^{s}\sum_{j=1}^{t_i}l_{i,j}+k_{m}]\right\}\\&&+[n-\sum_{i=1}^{s}n_i,k_m,k_m]\\
&\le & \max\left\{[n,\sum_{i=1}^{s}\sum_{j=1}^{t_i}l_{i,j}+k_{m},k_m],
[n,\sum_{i=1}^{s}\sum_{j=1}^{t_i}l_{i,j}+k_{m}]\right\},
\end{eqnarray*}
where the last inequality follows from Observations 2 and 5. Moreover, with equality holds if and only if $s=1$ and the equality occurs in Observation 2.
\end{proof}

\section{Proof of Theorem~\ref{main} and Corollaries}
Now we are ready to prove Theorem~\ref{main}.
\begin{proof}[Proof of theorem~\ref{main}]We  prove Theorem~\ref{main} by induction on $m$. For $m=2$, by Lemma~\ref{Pk1Pk2}, the assertion holds. Suppose it holds for smaller $m$. Let $G$ be any graph which does not contain $F_m$  and
$$e(G)> max\left\{[n,k_1,k_1],[n,k_1+k_2,k_2],\ldots,[n,\sum_{i=1}^{m}k_{i},k_{m}],
[n,\sum_{i=1}^{m}k_{i}]\right\}.$$
By  the induction hypothesis, $G$ must contain $F_{m-1}$. If $G$ is connected, by Lemma~\ref{connected},
\begin{eqnarray*}
e(G)&\leq& \max\left\{{\sum_{i=1}^{m}k_{i}-2 \choose 2}+n-\sum_{i=1}^{m}k_{i}+2,[n,\sum_{i=1}^{m}k_{i}]\right\}\\
&\leq& \max\left\{[n,\sum_{i=1}^{m}k_{i},k_m],[n,\sum_{i=1}^{m}k_{i}]\right\}\\
&\leq& \max\left\{[n,k_1,k_1],[n,k_1+k_2,k_2],\ldots,[n,\sum_{i=1}^{m}k_{i},k_{m}],[n,\sum_{i=1}^{m}k_{i}]\right\}
\end{eqnarray*}
 which is a contradiction. Suppose $G$ is disconnected. Since $G$ contains $F_{m-1}$, let $C_{i}$ be the component which contains $P_{l_{i,1}}\bigcup\ldots\bigcup P_{l_{i,t_i}}$, where $\{l_{i,1},\ldots, l_{i,t_i}\}\subseteq\{k_1,\ldots,k_{m-1}\}$, then $C_{i}$ does not contain $P_{l_{i,1}}\bigcup\ldots\bigcup P_{l_{i,t_i}}\bigcup P_{k_m}$  for $i=1,2,\ldots,s$. Further $$G-C_{1}\bigcup C_{2}\bigcup \ldots \bigcup C_{s} \mbox{ does not contain }P_{k_m}.$$ Let $v(C_{i})=n_i\ge \sum_{j=1}^{t_i}l_{i,j}$. By Lemma~\ref{km},
 \begin{eqnarray*}
 e(G)&\leq &\sum_{i=1}^{s}ex_{con}(n_i,P_{i_{1}}\bigcup\ldots\bigcup P_{i_{t}}\bigcup P_{k_m})+ex(n-\sum_{i=1}^{s}n_i,P_{k_m})\\&\leq & \max\left\{[n,\sum_{i=1}^{m}k_{i},k_{m}],[n,\sum_{i=1}^{m}k_{i}]\right\},
 \end{eqnarray*}
 which is a contradiction.

Let all of $k_1,k_2,\ldots,k_m$ be even. If $G$ is disconnected and the equality occurs in lemma~\ref{km}, then the equality must occur in observations 2. Moreover, if $G$ is connected, by lemma~\ref{connected}, the extremal graphs are determined. Hence, by induction, it is easy to see that the extremal graphs are characterized, which are
$$Ex(n,P_{k_1}),\ldots,Ex(n-\sum_{i=1}^{m}k_i+1,P_{k_m})\bigcup K_{\sum_{i=1}^{m}k_i-1},$$
$$K_{\sum_{i=1}^{s}\frac{k_i}{2}-1}+\overline{K}_{n-\sum_{i=1}^s\frac{k_i}{2}+1}.$$
In addition, $Ex(n, P_{k_1}), \ldots, Ex( n-\sum_{i=1}^mk_i+1, P_{k_m})$ are described in Theorem~\ref{Pathk2}.
 \end{proof}

In particular,
\begin{corollary}\cite{Bushwa2011}
Let $G$ be a graph with $n$ vertices. If $l$ is an even number, then
$$ex(n,k\cdot P_{l})= \max\left\{[n,kl,l],[n,kl]\right\}.$$
Moreover the extremal graphs are
$$Ex(n-kl+1,P_l)\bigcup K_{kl-1},K_{\frac{kl}{2}-1}+\overline{K}_{n-\frac{kl}{2}+1}.$$
\end{corollary}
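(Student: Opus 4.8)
The plan is to deduce this corollary directly from Theorem~\ref{main}, applied with $m=k$ and $k_1=k_2=\cdots=k_m=l$. Since $l$ is even we have $l\ge 4$, so all the $k_i$ are even and Theorem~\ref{main} applies in full: it gives
$$ex(n,k\cdot P_l)=\max\left\{[n,l,l],[n,2l,l],\ldots,[n,kl,l],[n,kl]\right\},$$
and it tells us that every extremal graph is either $Ex(n,P_l)$, or $K_{jl-1}\bigcup Ex(n-jl+1,P_l)$ for some $2\le j\le k$, or $K_{\frac{kl}{2}-1}+\overline{K}_{n-\frac{kl}{2}+1}$ --- namely one of these attaining the maximum. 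So what remains is (i) to collapse the first $k$ terms of the maximum into the single term $[n,kl,l]$, and (ii) to read off which of the listed graphs attains the overall maximum.

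For (i) I would prove that $j\mapsto[n,jl,l]$ is \emph{strictly} increasing on $1\le j\le k$. The convenient identity is $[n,jl,l]=\binom{jl-1}{2}+[n-jl+1,l,l]$, which holds since $n\ge kl\ge jl$ (compare the division of $n-jl+1$ by $l-1$ with the defining division of $n$ with offset $jl-1$). Subtracting, $[n,(j+1)l,l]-[n,jl,l]$ equals
$$\left(\binom{(j+1)l-1}{2}-\binom{jl-1}{2}\right)-\left([n-jl+1,l,l]-[n-(j+1)l+1,l,l]\right),$$
where the first bracket is $\tfrac{1}{2}l(2jl+l-3)\ge 3\binom{l-1}{2}$ for $j\ge1$, while a short case analysis on the residue of $n-(j+1)l+1$ modulo $l-1$ shows the second bracket is at most $2\binom{l-1}{2}$. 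For $l\ge4$ the difference is therefore positive, so $\max\{[n,l,l],\ldots,[n,kl,l]\}=[n,kl,l]$ and hence $ex(n,k\cdot P_l)=\max\{[n,kl,l],[n,kl]\}$.

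For (ii), the strict monotonicity just established shows that $Ex(n,P_l)$ (which has $[n,l,l]$ edges) and each $K_{jl-1}\bigcup Ex(n-jl+1,P_l)$ with $2\le j\le k-1$ (which has $[n,jl,l]$ edges) have strictly fewer than $[n,kl,l]\le ex(n,k\cdot P_l)$ edges, hence are not extremal. The only remaining candidates are $K_{kl-1}\bigcup Ex(n-kl+1,P_l)$, with $[n,kl,l]$ edges, and $K_{\frac{kl}{2}-1}+\overline{K}_{n-\frac{kl}{2}+1}$, with $[n,kl]$ edges; both are $(k\cdot P_l)$-free --- the former because all of its copies of $P_l$ lie in the clique $K_{kl-1}$, which cannot hold $k$ disjoint ones; the latter because every copy of $P_l$ in a join $K_s+\overline{K}_t$ meets $K_s$ in at least $l/2$ vertices, and $\tfrac{kl}{2}-1<k\cdot\tfrac{l}{2}$. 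So precisely the one(s) of these two graphs attaining $\max\{[n,kl,l],[n,kl]\}$ are extremal, which is the statement. The only point needing genuine care is the residue bookkeeping in step (i); everything else is a direct reading of Theorem~\ref{main}.
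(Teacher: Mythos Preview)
Your proof is correct and follows the same route as the paper: apply Theorem~\ref{main} with $k_1=\cdots=k_m=l$, then observe that the chain $[n,l,l]<[n,2l,l]<\cdots<[n,kl,l]$ collapses the maximum to $\max\{[n,kl,l],[n,kl]\}$. The paper simply asserts this monotonicity in one line; you supply a justification via the identity $[n,jl,l]=\binom{jl-1}{2}+[n-jl+1,l,l]$ and a residue comparison, which is sound (indeed the second bracket is at most $\binom{l-1}{2}+(l-2)\le 2\binom{l-1}{2}$, so the difference is at least $\binom{l-1}{2}>0$). Your part (ii) is likewise correct, though once Theorem~\ref{main} has been invoked the $(k\cdot P_l)$-freeness of the two candidate graphs is already implicit and need not be re-argued.
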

\begin{proof}
Since $[n,kl,l]>[n,(k-1)l,l]>\ldots>[n,2l,l]>[n,l,l]$, the assertion follows from  Theorem~\ref{main}.
\end{proof}

\remark In \cite{Bushwa2011}, Bushaw and Kettle showed that the graph $$K_{k\lfloor\frac{l}{2}\rfloor-1}+(K_i+\overline{K}_{n-k\lfloor\frac{l}{2}\rfloor-i+1})$$ is the extremal graph of $ex(n,k\cdot P_l)$ for $k\ge 2,l\ge 4,$ and $n\ge2l+2kl(\lceil\frac{l}{2}\rceil+1){l \choose \lfloor\frac{l}{2}\rfloor}$, ($i=1$, when $l$ is even, $i=2$ when $l$ is odd). Based on this result, they conjectured that this construction is optimal for $ n=\Omega(kl)$. Let $n=(kl-1)+t(l-1)+r,0\le r<l-1.$
An simple calculation shows that if $n\geq
\frac{5}{4}kl$, then $[n,kl]\geq[n,kl,l]$. In other words, we confirm their conjecture when $l$ is even.

\begin{corollary}
$ex(n,P_{6k}\bigcup P_{6k}\bigcup P_{4k})=ex(n,P_{8k}\bigcup P_{4k}\bigcup P_{4k})$ for all $n,k$. Moreover, if $n\ge 14k$, the extremal graphs are $K_{16k-1}\bigcup Ex(n-16k+1,4k)$ and $K_{8k-1}+\overline{K}_{n-8k+1}$.
\end{corollary}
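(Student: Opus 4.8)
The plan is to read both Turán numbers off Theorem~\ref{main} and then compare the two resulting maxima term by term. First, a trivial reduction: each of $P_{6k}\bigcup P_{6k}\bigcup P_{4k}$ and $P_{8k}\bigcup P_{4k}\bigcup P_{4k}$ has exactly $16k$ vertices, so for $n<16k$ neither is a subgraph of any $n$-vertex graph, whence both Turán numbers equal $\binom n2$ and $K_n$ (the degenerate form of $K_{16k-1}\bigcup Ex(n-16k+1,P_{4k})$) is the unique extremal graph. So assume $n\ge 16k$; then Theorem~\ref{main} applies since all path orders are even. The conceptual point is that the two forests share the same total order $16k$, the same smallest part $4k$, and the same value $\sum\lfloor k_i/2\rfloor=8k$, so the two ``main'' constructions produced by Theorem~\ref{main}, namely $K_{16k-1}\bigcup Ex(n-16k+1,P_{4k})$ and $K_{8k-1}+\overline{K}_{n-8k+1}$, literally coincide for the two forests; only the ``intermediate'' constructions differ, and the task is to show those are never optimal.

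Concretely, Theorem~\ref{main} gives
$$ex(n,P_{6k}\bigcup P_{6k}\bigcup P_{4k})=\max\{[n,6k,6k],\,[n,12k,6k],\,[n,16k,4k],\,[n,16k]\},$$
$$ex(n,P_{8k}\bigcup P_{4k}\bigcup P_{4k})=\max\{[n,8k,8k],\,[n,12k,4k],\,[n,16k,4k],\,[n,16k]\}.$$
The terms $[n,16k,4k]$ and $[n,16k]$ are common to both lists. Hence it suffices to show that the four remaining terms $[n,6k,6k]$, $[n,12k,6k]$, $[n,8k,8k]$, $[n,12k,4k]$ are each at most $[n,16k]$, for then both maxima equal $\max\{[n,16k,4k],[n,16k]\}$ and the equality of Turán numbers follows (together with the trivial range $n<16k$).

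To verify this, observe that each such term $[n,a,a']$ (with $(a,a')\in\{(6k,6k),(12k,6k),(8k,8k),(12k,4k)\}$, so $a\le 12k$ in every case) is the edge count of the $P_a$-free graph $K_{a-1}\bigcup t\cdot K_{a'-1}\bigcup K_r$ (or of $K_n$ when $n\le a-1$), so $[n,a,a']\le ex(n,P_a)\le\frac12(a-2)n\le(6k-1)n$ by Theorem~\ref{Pathk1}. On the other hand, since $16k$ is even,
$$[n,16k]=\binom{8k-1}{2}+(8k-1)(n-8k+1)=(8k-1)(n-4k),$$
so $[n,16k]-(6k-1)n=2k(n-16k+2)\ge 4k>0$ whenever $n\ge 16k$. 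Therefore every one of the four extra terms is strictly smaller than $[n,16k]$, and consequently $ex(n,P_{6k}\bigcup P_{6k}\bigcup P_{4k})=ex(n,P_{8k}\bigcup P_{4k}\bigcup P_{4k})=\max\{[n,16k,4k],[n,16k]\}$ for $n\ge 16k$, proving the first assertion for all $n,k$.

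For the extremal graphs: in the range $14k\le n<16k$ the unique extremal graph is $K_n$ (the degenerate first graph on the list), while for $n\ge 16k$ the strict inequalities just proved show that the maximum defining both Turán numbers is attained only by the terms $[n,16k,4k]$ and $[n,16k]$; hence by the extremal-graph characterization in Theorem~\ref{main} the extremal graphs for both forests are exactly $K_{16k-1}\bigcup Ex(n-16k+1,P_{4k})$ (when $[n,16k,4k]\ge[n,16k]$) and $K_{8k-1}+\overline{K}_{n-8k+1}$ (when $[n,16k]\ge[n,16k,4k]$), as claimed. The only step that demands care is the bookkeeping with the definitions of $[n,m,l]$ and $[n,m]$ in their various edge ranges (small $n$, and the case $t=0$); the single genuine inequality involved is the elementary linear comparison above, which is immediate from Erd\H{o}s--Gallai (Theorem~\ref{Pathk1}).
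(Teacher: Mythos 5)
Your proposal is correct, and at the top level it follows the same route as the paper: read both Tur\'an numbers off Theorem~\ref{main}, note that the terms $[n,16k,4k]$ and $[n,16k]$ are common to both maxima, and show the remaining terms never attain the maximum. Where you differ is in how that last step is carried out. The paper compares $[n,12k,6k]$ (resp.\ $[n,12k,4k]$, $[n,8k,8k]$) against $[n,16k]$ or $[n,16k,4k]$ by a case split at $n=18k-2$; you instead observe that every term $[n,a,a']$ with $a\le 12k$ counts the edges of a $P_a$-free graph, so by Erd\H{o}s--Gallai it is at most $\tfrac12(a-2)n\le(6k-1)n$, while $[n,16k]=(8k-1)(n-4k)$ exceeds $(6k-1)n$ by $2k(n-16k+2)>0$ for $n\ge 16k$. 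This uniform bound is cleaner, avoids the case analysis, and proves the slightly stronger fact that $[n,16k]$ alone dominates all four non-common terms. Your handling of the small range is also more careful than the paper's: you correctly restrict Theorem~\ref{main} to $n\ge\sum k_i=16k$ and dispose of $n<16k$ by the trivial observation that both forests have $16k$ vertices, whereas the paper invokes Theorem~\ref{main} already for $n\ge 14k$. (As you note, for $14k\le n<16k$ the unique extremal graph is $K_n$, so the corollary's description of the extremal graphs should really be read as applying once $n\ge 16k$; this imprecision is in the paper's statement, not in your argument.)
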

\begin{proof}
If $n<14k$, Clearly the assertion holds. So we may assume $n\ge 14k$. By Theorem~\ref{main},
\begin{eqnarray*}
&&ex(n,P_{6k}\bigcup P_{6k}\bigcup P_{4k})\\&=& \max\left\{[n,16k,4k],[n,12k,6k],[n,6k,6k],[n,16k]\right\}\\&=& \max\left\{[n,16k,4k],[n,12k,6k],[n,16k]\right\}\\&=& \max\left\{[n,16k,4k],[n,16k]\right\}.
\end{eqnarray*}
The third quality follows form the following two cases: (1) If $n\geq18k-2$, then $[n,16k]\geq[n,12k,6k]$. (2) If $n\leq18k-2$, then $[n,16k,4k]\geq[n,12k,6k]$. Oh the other hand,
\begin{eqnarray*}
&&ex(n,P_{8k}\bigcup P_{4k}\bigcup P_{4k})\\&=& \max\left\{[n,16k,4k],[n,12k,4k],[n,8k,8k],[n,16k]\right\}\\&= &\max\left\{[n,16k,4k],[n,8k,8k],[n,16k]\right\}\\&= & \max\left\{[n,16k,4k],[n,16k]\right\}.
\end{eqnarray*}
Hence the assertion holds.
\end{proof}

\begin{remark}
 In \cite{erdHos1976},  Erd\H{o}s and Simonovits asked that if $F_1$ and $F_2$ are two bipartite graphs, Giving conditions on $F_1$ and $F_2$ ensuring that $ex(n,F_1)=ex(n,F_2)$, provided $n$ is sufficiently large (also see \cite{bollobas1979}, chapter 6, problem 41). Let $F_m=P_{k_1}\bigcup\ldots\bigcup P_{k_m},F_{m^{\prime}}^{\prime}=P_{k^{\prime}_1}\bigcup\ldots\bigcup P_{k^{\prime}_{m^{\prime}}}$ and all of $\{k_1,\ldots,k_m\}$ are odd if and only if all of $\{k^{\prime}_1,\ldots,k^{\prime}_{m^{\prime}}\}$ are odd. By Theorem~\ref{Fm} \cite{lidicky2013}, if  $\sum_{i=1}^{m}\lfloor\frac{k_i}{2}\rfloor=\sum_{i=1}^{m^{\prime}}\lfloor\frac{k^{\prime}_i}{2}\rfloor$,  then $ex(n,F_{m^{\prime}}^{\prime})=ex(n,F_m)$, provided $n$ is sufficiently large. Our results show that there exist two family graphs $F_m$ and $F_m^{\prime}$ such that $ex(n, F_m)=ex(n, F_m^{\prime})$ for all $n$.
 \end{remark}

\section{Proof of Theorem~\ref{p2l+1,3}}
In order to prove Theorem~\ref{p2l+1,3},  we need the following notations and several Lemmas. Let $G=(V, E)$ be a simple graph. If $u$ and $v$ in $V$ are adjacent, we say that $u$ {\it hits} $v$ or $v$ {\it hits} $u$. If $u$ and $v$ are not adjacent, we say that $u$ {\it misses} $v$ or $v$ {\it misses} $u$.
\begin{lemma}\label{5,3}
Let $G$ be a graph with $n$ vertices. If $n\ge 8$, then
$$ex(n,P_{5}\bigcup P_{3})= \max\left\{21+\lfloor \frac{n-7}{2}\rfloor,2(n-1)\right\}.$$
Moreover, the extremal graphs are $K_7\bigcup M_{n-7}$ and $K_2+(K_2\bigcup \overline{K}_{n-4})$.
\end{lemma}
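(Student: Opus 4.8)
The plan is to prove the lower bound by exhibiting the two claimed graphs and the matching upper bound by a structural case analysis on an extremal graph. For the lower bound, $K_7\bigcup M_{n-7}$ has $\binom{7}{2}+\lfloor\frac{n-7}{2}\rfloor=21+\lfloor\frac{n-7}{2}\rfloor$ edges and contains no $P_5\bigcup P_3$: since $P_5\bigcup P_3$ has $8$ vertices and consists of two connected pieces, a copy would have to place both its $P_5$ and its $P_3$ inside the single $7$-vertex component $K_7$, while the component $M_{n-7}$ has no $P_3$. The graph $K_2+(K_2\bigcup\overline{K}_{n-4})$ has $1+2(n-2)+1=2(n-1)$ edges; deleting its two apex vertices leaves $K_2\bigcup\overline{K}_{n-4}$, which is $P_3$-free, and one checks that every $P_5$ in this graph uses both apexes, so no disjoint $P_3$ survives. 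Hence $ex(n,P_5\bigcup P_3)\ge\max\{21+\lfloor\frac{n-7}{2}\rfloor,2(n-1)\}$.

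For the upper bound, let $G$ be an extremal $P_5\bigcup P_3$-free graph on $n\ge 8$ vertices. If $G$ has no $P_5$, then Theorem~\ref{Pathk2} gives $e(G)\le[n,5,5]=ex(n,P_5)\le\lfloor\frac{3n}{2}\rfloor$, which is strictly smaller than $\max\{21+\lfloor\frac{n-7}{2}\rfloor,2(n-1)\}$ for every $n\ge 8$; so we may assume $G$ contains a $P_5$, say on vertex set $S$. The recurring observation is that a $P_3$-free graph is a disjoint union of edges and isolated vertices (a matching). Suppose first $G$ is disconnected and let $C$ be the component containing $S$; then $G-C$ is $P_3$-free, so $e(G-C)\le\lfloor\frac{n-|C|}{2}\rfloor$, and $C$ itself is $P_5\bigcup P_3$-free. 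If $|C|\le 7$ then $e(C)\le\binom{|C|}{2}$, and comparing the cases $|C|\in\{5,6,7\}$ forces $e(G)\le 21+\lfloor\frac{n-7}{2}\rfloor$, with equality only when $|C|=7$, $C=K_7$ and $G-C=M_{n-7}$. If $|C|\ge 8$, the connected analysis below yields $e(C)\le 2(|C|-1)$, so $e(G)<2(n-1)$ whenever $|C|<n$, producing no new extremal graph. Thus we may assume $G$ is connected.

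Now let $G$ be connected. Since $P_8\supseteq P_5\bigcup P_3$, $G$ has no $P_8$, so its longest path $P=v_1v_2\cdots v_p$ has $p\in\{5,6,7\}$. If $p=5$ (no $P_6$), Theorem~\ref{conenctedpath} with $k=6$ gives $e(G)\le\max\{n+2,2n-3\}=2n-3<2(n-1)$. If $p=6$ (no $P_7$), Theorem~\ref{conenctedpath} with $k=7$ gives $e(G)\le\max\{n+5,2n-2\}=2n-2$ for $n\ge 8$, with equality only for $G=K_2+(K_2\bigcup\overline{K}_{n-4})$. If $p=7$, put $R=V(G)\setminus\{v_1,\dots,v_7\}$; since $\{v_1,\dots,v_5\}$ carries a $P_5$, the graph $G[\{v_6,v_7\}\cup R]$ is $P_3$-free, hence a matching, so $v_6,v_7$ have no neighbour in $R$ and $R$ induces a matching; using the $P_5$ on $\{v_3,\dots,v_7\}$ gives the same for $v_1,v_2$; and if $v_4$ had a neighbour $u\in R$, then $v_1v_2v_3v_4u$ together with the disjoint path $v_5v_6v_7$ would be a forbidden $P_5\bigcup P_3$, so $v_4$ too has no neighbour in $R$. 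Hence only $v_3$ and $v_5$ can be joined to $R$, and a further bounded case analysis — controlling how many vertices of $R$ each of $v_3,v_5$ hits, and observing that if $v_3$ hits two vertices $u,u'\in R$ then any chord such as $v_2v_4$ of $G[\{v_1,\dots,v_7\}]$ would yield the $P_5$ $v_1v_2v_4v_5v_6$ disjoint from $uv_3u'$ (and likewise for the other chords), so that attaching more of $R$ forces $G[\{v_1,\dots,v_7\}]$ to lose edges — shows $e(G)\le 2(n-1)$ in this subcase.

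Combining all cases gives $e(G)\le\max\{21+\lfloor\frac{n-7}{2}\rfloor,2(n-1)\}$, with equality exactly for $K_7\bigcup M_{n-7}$ and $K_2+(K_2\bigcup\overline{K}_{n-4})$. I expect the main obstacle to be the connected subcase $p=7$: the generic bound for $P_8$-free connected graphs supplied by Theorem~\ref{conenctedpath} is only of order $3n$, so one must genuinely exploit $P_5\bigcup P_3$-freeness — through the repeated ``delete a $P_5$, the remainder is a matching'' device together with the chord-killing observations above — to bring the bound down to $2(n-1)$.
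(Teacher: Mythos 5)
Your overall strategy is the same as the paper's: verify the two constructions, split into connected and disconnected cases, invoke Theorem~\ref{conenctedpath} to dispose of everything up to the appearance of a $P_7$, and then analyse how the vertices outside a $P_7$ attach to it. The lower bound, the disconnected case, and the connected subcases with longest path $P_5$ or $P_6$ are all correct and essentially identical to what the paper does.

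The genuine gap is the connected subcase in which $G$ contains a $P_7=v_1\cdots v_7$, which is the heart of the lemma, and your ``further bounded case analysis'' does not actually establish $e(G)\le 2(n-1)$ there. The structural facts you prove (only $v_3,v_5$ can have neighbours in $R$, $R$ induces a matching) give at best $e(G)\le e(G[P_7])+2|R|\le 21+2(n-7)=2n+7$, and even if every vertex of $R$ hits only one of $v_3,v_5$ you get $21+(n-7)=n+14$, which still exceeds $\max\{21+\lfloor\frac{n-7}{2}\rfloor,2(n-1)\}$ for $8\le n\le 13$. So the bound cannot be closed without proving concrete lists of \emph{forbidden edges inside} $G[P_7]$, which is exactly what the paper does: if some $y\in R$ hits both $v_3$ and $v_5$, then at most one edge survives among $\{v_1,v_2\},v_4,\{v_6,v_7\}$ and moreover $v_3$ misses $\{v_6,v_7\}$ and $v_5$ misses $\{v_1,v_2\}$, giving $e(G)\le 2(n-7)+21-7-4=2n-4$; if every $y\in R$ hits at most one of $v_3,v_5$, then $\{v_1,v_2\}$ misses $\{v_4,v_5,v_7\}$ (and one more edge is lost), giving $e(G)\le n+7$. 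Your sketched mechanism --- $v_3$ having two neighbours $u,u'$ in $R$ so that $uv_3u'$ is a $P_3$ killing chords such as $v_2v_4$ --- addresses a different configuration and says nothing when each of $v_3,v_5$ has at most one neighbour in $R$, or when $|R|=1$; you also never rule out an edge of $R$ being attached to $v_3$ or $v_5$ (if $uu'\subseteq R$ and $u\sim v_3$, then $u'uv_3v_2v_1$ and $v_5v_6v_7$ form the forbidden graph, so connectivity forces $R$ to be independent). These counts are not routine bookkeeping that can be waved away: without them the inequality you need simply is not implied by what you have written.
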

\begin{proof}
Let $G$ be any graph which does not contain $P_{5}\bigcup P_{3}$ with $e(G)\geq max\{21+\lfloor \frac{n-7}{2}\rfloor,2(n-1)\}$. We consider the following two cases.

  {\bf Case 1.}  $G$ is connected. Suppose that $G\neq K_2+(K_2\bigcup \overline{K}_{n-4})$. By Theorem~\ref{conenctedpath}, $G$ contains a $P_{7}$. Let $P_{7}=x_{1}x_{2}\ldots x_{7}$ be a subgraph in $G$. First we will show that there is no edge in $G-P_{7}$.  Clearly, if there is an edge in $G-P_{7}$ then $G$ contains $P_{5}\bigcup P_{3}$, this is a contradiction.  If all the vertices in $G-P_{7}$ hit exact one vertex in $P_{7}$, then they must hit $x_{3}$ or $x_{5}$, say $y_{1}$ hits $x_{3}$. Obviously, $\{x_{1},x_{2}\}$ can't hit $ \{x_{4},x_{5},x_{7}\}$ and if $x_{1}$ or $x_{2}$ hits $x_{6}$, then $x_{7}$ must miss $x_{5}$. Hence
$$e(G)\leq{7 \choose 2}-6-1+n-7=n+7<\max\left\{21+\lfloor \frac{n-7}{2}\rfloor,2(n-1)\right\}.$$
If at least one of the vertices in $G-P_{7}$ hits two vertices in $P_{7}$, then there is at most one edge $x_{2}x_{6}$ among $\{x_{1},x_{2}\},x_{4},\{x_{6},x_{7}\}$ and $x_{3}$ can't hit $\{x_{6},x_{7}\},$ $x_{5}$ can't hit $\{x_{1},x_{2}\}$. Hence
$$e(G)\leq 2(n-7)+{7 \choose 2}-\left({5 \choose 2}-3\right)-4=2n-4<\max\left\{21+\lfloor \frac{n-7}{2}\rfloor,2(n-1)\right\}.$$
Both are  contradictions.

 {\bf Case 2.} $G$ is disconnected.  By Theorem\ref{Pathk2}, $G$ contains $P_5$, Let $C$ be the component with $n_1\ge5$ vertices which contains a $P_5$. Let $n_2=n-n_1$. If $n_1\ge 8$, then by the similar argument,
$$e(C)\leq  \max\left\{21+\lfloor \frac{n_1-7}{2}\rfloor,2(n_1-1)\right\},e(G-C)\leq \lfloor\frac{n_2}{2}\rfloor.$$
Hence
\begin{eqnarray*}
e(G)&\leq &e(C)+e(G-C)\\ &\leq& \max\left\{21+\lfloor \frac{n_1-7}{2}\rfloor,2(n_1-1)\right\}+ \lfloor\frac{n_2}{2}\rfloor\\&< &\max\left\{21+\lfloor \frac{n-7}{2}\rfloor,2(n-1)\right\},
\end{eqnarray*}
where  the second inequality becomes equality if and only if $G-C=K_2+(K_2\bigcup \overline{K}_{n_1-4})$.
If $n_1\leq7$, then
\begin{eqnarray*}
e(G)&\leq &e(G-C)+e(C)\\ &\leq& {n_1 \choose 2}+ \lfloor\frac{n_2}{2}\rfloor\\&\leq &\max\left\{21+\lfloor \frac{n-7}{2}\rfloor,2(n-1)\right\},
\end{eqnarray*}
with equality when $G=K_7\bigcup M_{n-7}$. So the assertion holds.
\end{proof}

\begin{lemma}\label{circle}
Let $G\neq K_{2l+3}\bigcup M_{n-2l-3}$ be a graph with $n$ vertices and $$e(G)\geq{2l+3 \choose 2}+\lfloor \frac{n-2l-3}{2}\rfloor.$$ If $G$ contains  either $C_{2l+2}$ or $C_{2l+3}$, then $G$ contains $P_{2l+1}\bigcup P_3.$
\end{lemma}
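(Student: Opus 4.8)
The plan is to exploit the long cycle to find the desired linear forest, splitting on whether or not $G$ is essentially the cycle plus isolated structure. First I would fix a cycle $C$ on $q$ vertices, $q\in\{2l+2,2l+3\}$, say $C=v_1v_2\cdots v_qv_1$. Since $q\ge 2l+2$, the cycle itself contains a path $P_{2l+1}$ on $2l+1$ consecutive vertices, leaving $q-(2l+1)\ge 1$ vertices of $C$ unused; so it suffices to find a $P_3$ disjoint from some such $P_{2l+1}$. If there is any edge of $G$ with both endpoints outside $C$, or more generally a component of $G-C$ containing an edge, together with one unused vertex of $C$ we are done; similarly if two unused vertices of $C$ are themselves joined along $C$ (which happens automatically when $q=2l+3$, since then there are two consecutive leftover vertices $v_{2l+2},v_{2l+3}$) we immediately get a $P_3$ from the cycle and win. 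Thus the remaining case is $q=2l+2$ with $G-C$ an independent set, and the only way to block a $P_3$ is that the single leftover cycle vertex, wherever we choose to place the ``gap'' in $P_{2l+1}$, has no neighbour available.

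Next I would push this further: by rotating the choice of which arc of $C$ forms the $P_{2l+1}$, the leftover vertex can be taken to be \emph{any} vertex $v_i$ of $C$. So the obstruction forces that for every $i$, the vertex $v_i$ together with its neighbourhood cannot host a $P_3$ once we delete the corresponding $P_{2l+1}$; concretely, every vertex of $V(G)\setminus C$ can have neighbours only among the (few) vertices that lie in no admissible $P_{2l+1}$, and interior chords of $C$ are severely constrained. I would then bound $e(G)$ from above under these constraints: the edges inside $C$ are at most ${2l+2\choose 2}$ minus whatever the chord restrictions remove, edges from $V\setminus C$ to $C$ are at most a constant per outside vertex, and edges inside $V\setminus C$ are zero. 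The point is that this count will fall strictly below ${2l+3\choose 2}+\lfloor\frac{n-2l-3}{2}\rfloor$ — which has a full $K_{2l+3}$'s worth of edges, i.e.\ ${2l+2\choose 2}+(2l+2)$ more than ${2l+2\choose 2}$ — unless $C$ can be extended to a $K_{2l+3}$, i.e.\ there is an extra vertex adjacent to enough of $C$; but an extra vertex adjacent to $3$ or more vertices of $C$ already yields $C_{2l+3}$ or a $P_{2l+1}\bigcup P_3$ directly by a short rotation argument. Handling the degenerate extreme where $G$ is exactly $K_{2l+3}$ plus a matching is exactly the excluded case $G\neq K_{2l+3}\bigcup M_{n-2l-3}$, so every surviving configuration contains $P_{2l+1}\bigcup P_3$.

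The main obstacle I anticipate is the bookkeeping in the ``blocked'' case $q=2l+2$: one must show that forbidding $P_{2l+1}\bigcup P_3$ while keeping the edge count as large as ${2l+3\choose 2}+\lfloor\frac{n-2l-3}{2}\rfloor$ really does force the structure $K_{2l+3}\bigcup M_{n-2l-3}$, and this requires carefully tracking which chords of $C$ and which pendant edges are simultaneously compatible with \emph{all} rotations of the path $P_{2l+1}$ around $C$. The rotation/extension lemmas for cycles (finding $C_{2l+3}$ from $C_{2l+2}$ plus a well-connected vertex, or finding a Hamilton-type longer path) are standard but need to be invoked cleanly; I would isolate the claim ``if a vertex $u\notin C$ has at least two neighbours on $C_{2l+2}$ that are non-adjacent along $C$, then $G\supseteq P_{2l+1}\bigcup P_3$ or $G\supseteq C_{2l+3}$'' as the key sub-step, after which the edge-count contradiction is routine.
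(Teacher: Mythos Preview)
Your overall strategy---use the long cycle to carve out $P_{2l+1}$ and then hunt for a disjoint $P_3$, falling back on an edge count when blocked---is exactly the paper's approach. But two of your opening reductions are simply wrong, and they are not cosmetic.

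First, in the $C_{2l+3}$ case you write that the two leftover consecutive cycle vertices $v_{2l+2},v_{2l+3}$ ``immediately'' give a $P_3$. They do not: two consecutive vertices form a $P_2$. You still need a third vertex adjacent to one of them, and both of their cycle-neighbours lie in the chosen $P_{2l+1}$. The correct argument (and the paper's) is: if any $u\notin C$ hits some $v_i\in C_{2l+3}$, take $P_3=uv_iv_{i+1}$ and $P_{2l+1}=v_{i+2}\cdots v_{i-1}$; hence no outside vertex can hit $C_{2l+3}$, and since $G-C$ is then $P_3$-free (a matching) one gets $e(G)\le\binom{2l+3}{2}+\lfloor\frac{n-2l-3}{2}\rfloor$ with equality only for the excluded graph.

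Second, in the $C_{2l+2}$ case you claim that an edge $uv$ in $G-C$ together with the single unused cycle vertex ``wins''. Again no: $u,v,w$ form a $P_3$ only if $w$ is adjacent to $u$ or $v$, which is not automatic. The right statement is that $G-C$ is $P_3$-free, hence a matching (not an independent set as you assert), and that an edge of this matching whose endpoint hits some $v_i$ yields $P_3$ after rotating the gap to $v_i$; so matching edges in $G-C$ are allowed but must be completely detached from $C$. From there the paper finishes by a short pigeonhole: two distinct isolated vertices of $G-C$ cannot hit the same or consecutive cycle vertices (extend the $2l$-vertex arc by one pendant to get $P_{2l+1}$), so the cross-edges number at most $l+1$, giving $e(G)\le\binom{2l+2}{2}+(l+1)+\lfloor\frac{n-3l-3}{2}\rfloor<\binom{2l+3}{2}+\lfloor\frac{n-2l-3}{2}\rfloor$. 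Your proposed rotation machinery and the ``two non-adjacent neighbours on $C$'' sub-claim are in the right spirit but are both heavier than needed and, as written, rest on the two false reductions above.
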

\begin{proof}
Suppose $G$ contains no $P_{2l+1}\bigcup P_3.$ If $G $ contains $C_{2l+3}$, then any vertex in $G-C_{2l+3}$ can't hit the vertices in $C_{2l+3}$. Hence $e(G)<{2l+3 \choose 2}+\lfloor\frac{n-2l-3}{2}\rfloor$, which is a contradiction. If $G$ contains $C_{2l+2}$, then each component of $G-C_{2l+2}$ is either isolated vertex or edge. It is easy to see that the vertices of the edge in $G-C_{2l+2}$ can not hit $C_{2l+2}$, and any two of the isolated vertices in $G-C_{2l+2}$ can not hit the same vertex of $C_{2l+2}$ or any two consecutive vertices of $C_{2l+2}$. Therefore $e(G)\leq{2l+2 \choose 2}+l+1+\lfloor \frac{n-3l-3}{2}\rfloor<{2l+3 \choose 2}+\lfloor \frac{n-2l-3}{2}\rfloor$, which is also a contradiction.
\end{proof}
Now we are ready to prove Theorem~\ref{p2l+1,3}.
\begin{proof}[Proof of Theorem~\ref{p2l+1,3}] For $l=2$, the assertion follows from Lemma~\ref{5,3}. Hence we may assume $l\geq 3$. Let $G$ be any graph which does not contain $P_{2l+1}\bigcup P_{3}$ and
$$e(G)\geq \max\left\{{2l+3 \choose 2}+\lfloor \frac{n-2l-3}{2}\rfloor,{l \choose 2}+l(n-l)+1,[n,2l+1,2l+1]\right\}.$$
Then by Theorem~\ref{Pathk2}, $G$ contains $P_{2l+1}$.

{\bf Case 1.} $G$ does not contain $P_{2l+3}$, we claim that $G$ is connected. In fact, if $G$ is disconnected,
then one of the components, says $C$ with $n_1$ vertices, must contain $P_{2l+1}$ and the other component is edge or isolated vertex. Hence
\begin{eqnarray*}
e(G)&=&e(C)+e(G-C)\\&\leq &ex_{con}(n_1,P_{2l+3})+\lfloor\frac{n-n_1}{2}\rfloor\\&\leq &\max\left\{{2l+1 \choose 2}+n_1-2l-1,{l \choose 2}+l(n_1-l)+1\right\}+\lfloor\frac{n-n_1}{2}\rfloor\\&<&
\max\left\{{2l+3 \choose 2}+\lfloor \frac{n-2l-3}{2}\rfloor,{l \choose 2}+l(n-l)+1,[n,2l+1,2l+1]\right\},
\end{eqnarray*}
which also is a contradiction. Further  by Theorem~\ref{conenctedpath},
\begin{eqnarray*}
e(G)&\leq& \max\left\{{2l+1 \choose 2}+n-2l-1,{l \choose 2}+l(n-l)+1\right\}\\
&\leq&\max\left\{{2l+3 \choose 2}+\lfloor \frac{n-2l-3}{2}\rfloor,{l \choose 2}+l(n-l)+1,[n,2l+1,2l+1]\right\},
\end{eqnarray*}
with the quality holds when $G=K_l+(K_2\bigcup \overline{K}_{n-l-2})$, where the last inequality follows from  ${l \choose 2}+l(n-l)+1\geq{2l+1 \choose 2}+n-2l-1$ for  $n\geq\frac{5l-1}{2}$, and  ${2l+1 \choose 2}+n-2l-1<{2l+3 \choose 2}+\lfloor \frac{n-2l-3}{2}\rfloor$ for $n<\frac{5l-1}{2}$.  So the assertion holds.

{\bf Case 2.} $G$ contains $P_{2l+3}$. Let $P_{2l+3}=x_1x_2\ldots x_{2l+3}$, $Y=G-P_{2l+3}$ and $V(Y)=\{y_1,y_2,\ldots,y_{n-2l-3}\}$, $d_{P_{2l+3}}(y_i)$ be the number of vertices which adjacent to $y_i$ in $P_{2l+3}$ for $i=1,2,\ldots,n-2l-3$. Obviously, $y_{i}$ can not hit $x_{1},x_{2},x_{2l+2},x_{2l+3}$, moreover $y_{i}$ can not hit both vertices of $\{x_{k},x_{k+1}\}$ or $\{x_{k},x_{k+4}\}$  for $k=1,2,\ldots,2l+3$. So $d_{P_{2l+3}}(y_{i})\leq l-1$. Let $y$ be a vertex in $Y$ with $d_{P_{2l+3}}(y)$ being maximum value,  and $x_{i_1},x_{i_2},\ldots,x_{i_s}$ be the all neighbours of $y$ in $P_{2l+3}$, if $s=0$, then $G[P_{2l+3}]$ is a component of $G$, the result follows. Hence we may assume $s\ge1$.

{\bf Claim.} There are $2s$ distinct vertices in $P_{2l+3}$ which form $s$ pairs vertices whose degree sum is at most $2l+3$.

{\bf Fact 1.} $i_{k+1}-i_{k}\neq 4$. Because $G$  does not contain $P_{2l+3}\bigcup P_3$.

{\bf Fact 2.} $d_{P_{2l+3}}(x_{i_k-1})+d_{P_{2l+3}}(x_{i_k+2})\leq2l+3$. Let $x_{p}$ be a neighbor of $x_{i_k-1}$. If $p<i_k-1$, then $x_{i_k+2}$ can not hit $x_{p+1}$, otherwise, $x_1x_2\ldots x_px_{i_k-1}\ldots x_{p+1}x_{i_k+2}\ldots x_{2l+3}$ together with $yx_{i_k}x_{i_k+1}$ is a $P_{2l+1}\bigcup P_3$ in $G$, a contradiction. Similarly, if $p>i_k+1$, $x_{i_k+2}$ can not hit $x_{p+1}$. Let $d_{P_{2l+3}}(x_{i_k-1})=z$. Since $x_{i_k+2}$ can not hit $x_1$, we have  $d_{P_{2l+3}}(x_{i_k-1})+d_{P_{2l+3}}(x_{i_k+2})\leq z+2+2l+2-z-1=2l+3$.

{\bf Fact 3.} $d_{P_{2l+3}}(x_{i_k-2})+d_{P_{2l+3}}(x_{i_k+1})\leq2l+3$.
The proof of this fact is similar to the proof of Fact 2.

Let $x_{i_{j_l}}$ be the neighbor of $y$ such that $x_{i_{j_l}-2}$ is also the neighbor of $y$  for $l=1,2,\ldots,t$. Obviously, $\{x_{i_{j_1}},x_{i_{j_2}},\ldots,x_{i_{j_t}}\}$ divides $P_{2l+3}$ into $t+1$ parts. By Fact 1, we can choose pairs of vertices in each part by $\{x_{i_k-1},x_{i_k+2}\},\{x_{i_{k+1}-2},x_{i_{k+1}+1}\}$ alternately. In the first part, we choose
$\{x_{{i_1}-1},x_{{i_1}+2}\},$ $\{x_{{i_2}-2},x_{{i_2}+1}\},$ $\{x_{{i_3}-1},x_{{i_3}+2}\},$ $\ldots,$ $\{x_{{i_{j_1}}-4},x_{{i_{j_1}}-1}\}$ or $ \{x_{i_{j_1}-3},x_{i_{j_1}}\}.$ In the following parts, we will always begin with the pair $\{x_{i_{j_l}-2},x_{i_{j_l}+1}\},$ for $l=1,2,\ldots,t$. So in the second part, we choose
$\{x_{i_{j_1}-2},x_{i_{j_1}+1}\},$ $\{x_{i_{j_1+1}-1},x_{i_{j_1+1}+2}\},$ $\{x_{i_{j_1+2}-2},x_{i_{j_1+2}+1}\},$ $\ldots,$ $\{x_{i_{j_2}-4},x_{{i_{j_2}}-1}\}$ or $ \{x_{i_{j_2}-3},x_{i_{j_2}}\}.$ The process will go on until in the last part we choose
$\{x_{i_{j_t}-2},x_{i_{j_t}+1}\},\{x_{i_{j_t+1}-1},x_{i_{j_t+1}+2}\},\{x_{i_{j_t+2}-2},x_{i_{j_t+2}+1}\},\ldots,\\ \{x_{{i_s}-2},x_{{i_s}+1}\}$ or $\{x_{{i_s}-1},x_{{i_s}+2}\}.$ By Facts 2 and  3, those $s$ pairs vertices whose degree sum is at most $2l+3$. Thus we finish our claim.

 Those $s$ pairs of vertices together with $\{x_1,x_{2l+2}\}$ or $\{x_1,x_{2l+3}\}$ are distinct vertices, and $d_{P_{2l+3}}(x_1)+d_{P_{2l+3}}(x_{2l+2})\leq 2l+1$, $d_{P_{2l+3}}(x_1)+d_{P_{2l+3}}(x_{2l+3})\leq 2l+1$. In fact, if $d_{P_{2l+3}}(x_1)+d_{P_{k+2}}(x_{2l+2})\geq 2l+2$ or $d_{P_{2l+3}}(x_1)+d_{P_{2l+3}}(x_{2l+3})\geq 2l+2$, $G$ must contain $C_{2l+2}$ or $C_{2l+3}$, by Lemma~\ref{circle}, $G$ contains $P_{2l+3}\bigcup P_3$, a contradiction. Hence, we have
$$e(G)\leq {2l+3 \choose 2}-\lceil\frac{2s(l+1)+2l+3}{2}\rceil+s(n-2l-3)+\lfloor\frac{n-2l-3}{2}\rfloor.$$
We will consider two cases.
(1) If $n\leq 3l+5$, then $e(G)<{2l+3 \choose 2}+\lfloor \frac{n-2l-3}{2}\rfloor$. (2) If $n\geq 3l+6$, we will show that $e(G)<{l \choose 2}+l(n-l)+1$. Since $${l \choose 2}+l(n-l)+1-\left[{2l+3 \choose 2}-\lceil\frac{2s(l+1)+2l+3}{2}\rceil+s(n-2l-3)+\lfloor\frac{n-2l-3}{2}\rfloor\right]$$ is increasing with respect to $n$, we only to check $n=3l+6$, that is ${2l+3 \choose 2}-[s(l+1)+l+2]+s(l+3)+\lfloor\frac{l+3}{2}\rfloor<{l \choose 2}+(2l+6)l+1$, this is true for $l\geq 3$. By (1) and (2),
\begin{eqnarray*}
e(G)&<& \max\left\{{2l+1 \choose 2}+n-2l-1,{l \choose 2}+l(n-l)+1\right\}\\
&<&\max\left\{{2l+3 \choose 2}+\lfloor \frac{n-2l-3}{2}\rfloor,{l \choose 2}+l(n-l)+1,[n,2l+1,2l+1]\right\},
\end{eqnarray*}
which is a contradiction. The proof is completed.
\end{proof}

\section{Proof of Theorem~\ref{p5,p5}}
In order to prove Theorem~\ref{p5,p5},  we need the following Lemma.
\begin{lemma}\label{conected,p5,p5}Let $G$ be a connected graph with $n$ vertices. If $n\ge 10$, then
 $$ex_{con}(n,P_{5}\bigcup P_{5})\leq \max\{[n,10,5],3n-5\}.$$
 Moreover if $ex_{con}(n,P_{5}\bigcup P_{5})=3n-5$, then $G=(K_2\bigcup\overline{K}_{n-5})+K_3$.
\end{lemma}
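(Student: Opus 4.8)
The plan is to bound the number of edges in a connected $P_5\cup P_5$-free graph by arguing in two regimes according to whether $G$ contains a long path or long cycle. If $G$ has at most $9$ vertices on its longest path, i.e. $G$ is $P_{10}$-free, then $G$ is in particular $P_5\cup P_5$-free automatically, and $e(G)\le ex_{con}(n,P_{10})=\max\{{8\choose 2}+(n-8),[n,10]+1\}$ by Theorem~\ref{conenctedpath}; a direct comparison shows this quantity is at most $\max\{[n,10,5],3n-5\}$ for $n\ge 10$ (here $[n,10]+1$ matches the $3n-5$ term up to the additive constant, and ${8\choose2}+(n-8)$ is dominated once $n\ge 10$). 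So the substantive case is when $G$ contains $P_{10}=x_1x_2\cdots x_{10}$.

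In that case I would fix such a path $P_{10}$ and set $Y=G-P_{10}$. As in the proof of Theorem~\ref{p2l+1,3}, any vertex of $Y$ and any endpoint structure is heavily constrained: since $G$ has no $P_5\cup P_5$, the two halves $x_1\cdots x_5$ and $x_6\cdots x_{10}$ cannot simultaneously be disjointly ``completed'' to two $P_5$'s using the neighbourhoods in $Y$ and chords of $P_{10}$. Concretely I would first show $Y$ induces only isolated vertices and edges of bounded total effect, then bound the number of edges from $Y$ to $P_{10}$: a vertex $y\in Y$ adjacent to a vertex $x_i$ with $3\le i\le 8$ immediately produces a $P_5$ in the "far" half together with $P_5$ on $\{x_1,\dots,x_5\}$ or $\{x_6,\dots,x_{10}\}$, so $y$ can only attach near the two ends, and two vertices of $Y$ cannot both attach near the same end without creating the forbidden configuration. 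Combining the internal $P_{10}$ edges, the chords of $P_{10}$ (restricted by the fact that a long chord plus the path yields a long cycle, which via a Lemma~\ref{circle}-type argument would give two disjoint $P_5$'s), and the $Y$-to-$P_{10}$ and $Y$-internal edges, I expect a bound of the form $e(G)\le {10\choose 2}+(\text{small correction})+ (\text{at most }3\text{ per extra vertex})$, which is at most $\max\{[n,10,5],3n-5\}$; tracking the equality case pins $G$ down to $(K_2\cup\overline K_{n-5})+K_3$, the graph achieving $3n-5$.

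The main obstacle, as in Theorem~\ref{p2l+1,3}, is the bookkeeping when $G$ does contain $P_{10}$: one must carefully classify which chords of $P_{10}$ can appear, how vertices of $Y$ can attach, and rule out all the ways two disjoint $P_5$'s could be assembled from a $P_{10}$ plus a few chords or pendant vertices. This requires a Lemma~\ref{circle}-style statement — that a connected graph on $n\ge 10$ vertices with $e(G)\ge {10\choose 2}+\lfloor (n-10)/2\rfloor$ containing $C_9$ or $C_{10}$ (or a slightly shorter cycle with enough attached structure) must contain $P_5\cup P_5$ — which I would prove first and then feed into the case analysis. The remaining comparisons among $[n,10,5]$, $3n-5$, ${8\choose2}+(n-8)$ and $[n,10]+1$ are routine piecewise-linear/quadratic inequalities in $n$ and would be relegated to the appendix-style calculations already used elsewhere in the paper. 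Once the connected bound is in hand, Theorem~\ref{p5,p5} itself follows by the same split-into-components argument as in Lemma~\ref{Pk1Pk2} and Theorem~\ref{main}, using Observation-type superadditivity to show the disconnected extremal graph is $K_9\cup Ex(n-9,P_5)$.
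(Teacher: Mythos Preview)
Your dichotomy on $P_{10}$ collapses. Since $P_{10}$ already contains two vertex-disjoint copies of $P_5$ (namely $x_1\cdots x_5$ and $x_6\cdots x_{10}$), every $P_5\cup P_5$-free graph is automatically $P_{10}$-free; your ``substantive case'' where $G$ contains $P_{10}$ is vacuous, and all of the chord/attachment analysis you sketch for it is moot. That leaves only your first regime, but there the bound you quote is wrong: for $k=10$ (even) Theorem~\ref{conenctedpath} gives $ex_{con}(n,P_{10})=\max\{\binom{8}{2}+(n-8),\,[n,10]\}$, and $[n,10]=\binom{4}{2}+4(n-4)=4n-10$, not $3n-5$ ``up to an additive constant''. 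Thus the $P_{10}$-free bound is roughly $4n$, strictly larger than the target $\max\{[n,10,5],3n-5\}$ for all $n\ge 10$, and the argument gives nothing.

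The paper's proof hinges on choosing the right path length: it uses $P_9$, not $P_{10}$. The point is that $ex_{con}(n,P_9)=\max\{\binom{7}{2}+(n-7),\,[n,9]+1\}=\max\{n+14,\,3n-5\}=3n-5$ for $n\ge 10$, and the unique extremal graph for $P_9$ in this range is precisely $(K_2\cup\overline{K}_{n-5})+K_3$. Hence any connected $G\neq(K_2\cup\overline{K}_{n-5})+K_3$ with $e(G)\ge\max\{[n,10,5],3n-5\}$ must contain a $P_9=x_1\cdots x_9$. The real work is then a case analysis around this $P_9$: one classifies how vertices and edges of $G-P_9$ can attach (only to $x_5$, or to $\{x_2,x_3,x_5,x_7,x_8\}$ with severe restrictions) and, crucially, shows that each type of attachment forces many non-edges \emph{inside} $G[P_9]$ (e.g.\ an edge of $G-P_9$ touching $x_5$ forces $e(G[P_9])\le 24$; two outside vertices each with three neighbours on $P_9$ force $e(G[P_9])\le 21$). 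Balancing the gain from attachments against the loss inside $P_9$ yields $e(G)<\max\{[n,10,5],3n-5\}$ in every case. Your Lemma~\ref{circle}-style idea is not needed here; the argument is a direct edge count on $G[P_9]$ versus $G-P_9$.
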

\begin{proof} Let $G\neq (K_2\bigcup\overline{K}_{n-5})+K_3$ be any connected graph which does not contain $P_5\bigcup P_5$  with
$e(G)\geq  \max\{[n,10,5],3n-5\}$. Then $ \max\{[n,10,5],3n-5\}\ge ex_{con}(n,P_9)$.  By Theorem~\ref{conenctedpath}, $G$ contains $P_9$. Let $P_9= x_1x_2\ldots x_9$ be a subgraph of $G$. Then  each vertex in $G-P_9$ misses $\{x_1,x_4,x_6,x_9\}$ and can not hit both vertices of $\{x_2,x_8\}$. Moreover, if $y$ is not an isolated vertex in $G-P_9$, then $y$ can only hit $x_5$,  otherwise $G$ contains $P_5\bigcup P_5$. First, we will prove the following Facts.

{\bf Fact 1.} If an edge of $G-P_9$ hits $P_9$, then $e(G[P_9])\leq24.$

Let $y_1y_2$ be an edge in $G-P_9$, $y_1$ hits $x_5$. Then $\{x_1,x_2\}$ misses $\{x_6,x_7,x_8,x_9\}$ and $\{x_3,x_4\}$ misses $\{x_8,x_9\}$. So $e(G[P_9])\leq36-12=24$.

{\bf Fact 2.}  If a $P_3=y_1y_2y_3$ of $G-P_9$ such that $y_1$ hits $P_9$, then $e(G[P_9])\leq21.$

 Clearly, $y_1$ must hit $x_5$, $\{x_1,x_2,x_3\}$ misses $\{x_6,x_7,x_8,x_9\}$ and $ x_4 $ misses $\{x_7,x_8,x_9\}$. So $e(G[P_9])\leq36-15=21$.

{\bf Fact 3.} If two isolated vertices both hit three vertices of $P_9$, then they must hit the same vertices. Moreover $e(G[P_9])\leq21.$

Let $y_1,y_2$ be two vertices both hit three vertices of $P_9$. If $y_1$ hits $x_2,x_5,x_7$, then $y_2$ can not hit $x_3$, otherwise $y_2$ hits $x_3$ which implies that $x_4x_3y_2x_5x_6$, $x_1x_2y_1x_7x_8$ are two disjoint $P_5$. Moreover, $y_2 $ can't hit $x_8$, otherwise $x_1x_2y_2x_8x_9$, $x_3x_4x_5x_6x_7$ are two disjoint $P_5$. Hence $y_2$ hits $x_2,x_5,x_7$. Further it is  easy to see that there is no edge among $x_1,\{x_3,x_4\},x_6,\{x_8,x_9\}$ and $\{x_3,x_4\}$ misses $x_7$. Then $e(G[P_9])\leq36-15=21$.
If $y_1$ hits $x_3,x_5,x_7$, then $y_2$ can not hit $x_2,x_8$, otherwise  $y_2$ hits $x_2$ which implies that $x_1x_2y_2x_7x_8$ $(x_1x_2y_2x_7x_8)$ and $x_4x_3y_1x_5x_6$ are two disjoint $P_5$. Hence $y_2$ hits $x_3,x_5,x_7$.  It is  easy to see that there is no edge among $\{x_1,x_2\},x_4,x_6,\{x_8,x_9\}$ and $\{x_1,x_2\}$ misses $x_7$. Then $e(G[P_9])\leq36-15=21$.

{\bf Fact 4.} If an isolated vertex hit two vertices of $P_9$, then $e(G[P_9])\leq29.$

Let $y$ be an isolated vertex in $G-P_9$ which hits exact two vertices of $P_9$. If $y$ hits $\{x_2,x_5\}$, then $\{x_3,x_4\}$ misses $\{x_7,x_9\}$ and  $x_1$ misses $\{x_4,x_6,x_9\}$. If $y$ hits $\{x_3,x_5\}$, then $\{x_1,x_2\}$ misses $\{x_7,x_9\}$, $x_1$ misses $\{x_4,x_6\}$, and $x_9$ misses $x_4$. If $y$ hits $\{x_2,x_7\}$, then $\{x_3,x_5\}$ misses $\{x_8,x_9\}$, and $x_1$ misses $\{x_3,x_4,x_6\}$. If $y$ hits $\{x_3,x_7\}$, then $\{x_4,x_6\}$ misses $\{x_1,x_2,x_8,x_9\}$. In any situation, it is easy to see  that $e(G[P_9])\leq36-7=29.$

{\bf Fact 5.} If an isolated vertex hits one vertex of $P_9$, then $e(G[P_9])\leq33.$

Let $y$ be an isolated vertex in $G-P_9$ which hits only one vertex of $P_9$. If $y$ hits $x_2$, then $x_1$ misses $\{x_4,x_6,x_9\}$. If $y$ hits $x_3$, then $\{x_1,x_2\}$ misses $\{x_7,x_9\}$. If $y$ hits $x_5$, then $x_1$ misses $\{x_6,x_9\}$ and $x_9$ misses $\{x_1,x_4\}$. In any situation, it is easy to see that $e(G[P_9])\leq36-3=33.$

Now we consider the following two cases.

{\bf Case 1.}  There is an edge in $G-P_9$.
Let $P_k$ be a longest path start at $x_5$ in $G[x_5\bigcup V(G-P_9)]$. If $k\geq 4, $ then  the number of edges incident with the vertices of $G-P_9$ is at most $3(n-9)$.
Since $G-P_9$ can't contain $P_5$, $y$ can only hit $x_5$ for $y$ being not an isolated vertex in $G-P_9$ and an isolated vertex in $G-P_9$ hits at most three vertices of $P_9$.
By Fact 2,  we have $e(G)\leq21+3(n-9)<\max\{[n,10,5],3n-5\}$, a contradiction.  If  $k\leq 3$,  each component of $G-P_9$  is a star (with at least three vertices), or an edge,  or an isolated vertex. Clearly, only the center of the star (the vertex of the star with degree more than one) can hit $x_5$. Hence the number of edges incident with the vertices of $G-P_9$ is at most $3(n-9)-3$. So by Fact 1, $e(G)\leq 24+3(n-9)-3<\max\{[n,10,5],3n-5\}$, which is also a contradiction.

{\bf Case 2:} There are no edges in $G-P_9$. If  there are at least two vertices which hits three vertices of $P_9$, then by Fact 3,  we have $e(G)\leq21+ 3(n-9)<\max\{[n,10,5],3n-5\}$, a contradiction. If all vertices of $G-P_9$ hit only one vertex of $P_9$, then by Fact 5,  we have $e(G)\leq33+(n-9)<\max\{[n,10,5],3n-5\}$, a contradiction. If  there is at least one vertex which hits two vertices of $P_9$ and there is at most one vertex which hits three vertices of $P_9$, then by Fact 4, we have $e(G)\leq29+2(n-9)+1<\max\{[n,10,5],3n-5\}$, a contradiction. So the assertion holds.
\end{proof}
Now we are ready to prove Theorem~\ref{p5,p5}.
\begin{proof}[Proof of Theorem~\ref{p5,p5}]
Let $G$ be any graph which does not contain $P_5\bigcup P_5$ with $e(G)\geq max\{[n,10,5],3n-5\}$. If $G$ is connected, then the assertion  follows from Lemma~\ref{conected,p5,p5}. If $G$ is disconnected, $ G$ contains $P_5$ by  $e(G)>ex(n,P_5)$.  Let $C$ be a component with $n_1\ge 5$ vertices which contains $P_5$. Obviously $C$ contains no $P_5\bigcup P_5$ and $G-C$ contains no $P_5.$ If $n_1\ge 10,$ then
$$e(G)\leq \max\{[n_1,10,5],3n_1-5\}+[n-n_1,5,5]< \max\{[n,10,5],3n-5\}.$$
 If $n_1\leq9,$ then $e(G)\leq {n_1 \choose 2}+[n-n_1,5,5]\leq \max\{[n,10,5],3n-5\}$ with the equality holds only when $n_1=9$ and $G=K_9\bigcup Ex(n,P_5)$. The proof is completed.
\end{proof}

\section{Conclusion}
Theorems~\ref{main}, ~\ref{p2l+1,3} and \ref{p5,p5} show that $$ex(n,F_m)=\max\left\{[n,k_1,k_1],[n,k_1+k_2,k_2],\ldots,[n,\sum_{i=1}^{m}k_{i},k_{m}]\right\} \mbox{ for small }n,$$ while Theorem~\ref{Fm} determines the value $ex(n,F_m)$  for $n$ sufficiently large.  So we may propose the following conjecture.
\begin{conjecture}
Let $k_1\geq k_2\geq\ldots\geq k_{m}\geq 3$ and $k_1>3$. If $F_m=
P_{k_1}\bigcup P_{k_2}\bigcup \ldots \bigcup P_{k_m}$, then
\begin{eqnarray*}
e(n, F_m)&= &\max\left\{\left[n,k_1,k_1\right],\left[n,k_1+k_2,k_2\right],\ldots,\left[n,\sum_{i=1}^{m}k_{i},k_{m}\right],\left[n,\sum_{i=1}^{m}\lfloor \frac{k_{i}}{2}\rfloor\right]+c\right\},
\end{eqnarray*}
where $c=1$ if all of $k_1,k_2,\ldots,k_m$ are odd, and $c=0$ for otherwise. Moreover, the extremal graphs are
$$Ex(n,P_{k_1}),\ldots,K_{\sum_{i=1}^{m}k_i-1}\bigcup Ex(n-\sum_{i=1}^{m}k_i+1,P_{k_m}), \mbox{and} $$
$$\begin{array}{ll}K_{\sum_{i=1}^{m}\lfloor \frac{k_{i}}{2}\rfloor-1}+(K_2\bigcup\overline{K}_{n-\sum_{i=1}^{m}\lfloor \frac{k_{i}}{2}\rfloor-1})& \mbox{ if all of }\{k_1,k_2,\ldots,k_m\}\mbox{ are odd},\\
K_{\sum_{i=1}^{m}\lfloor \frac{k_{i}}{2}\rfloor-1}+(\overline{K}_{n-\sum_{i=1}^{m}\lfloor \frac{k_{i}}{2}\rfloor+1})& \mbox{  otherwise}.
\end{array}$$
\end{conjecture}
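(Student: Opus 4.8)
The plan is to prove the conjecture by induction on $m$, keeping the architecture of the proof of Theorem~\ref{main} but upgrading every ingredient that was built on the hypothesis ``at most one of the $k_i$ is odd'' to a version valid for an arbitrary number of odd parts. Throughout write $k=\sum_{i=1}^{m}k_i$, $s=\sum_{i=1}^{m}\lfloor k_i/2\rfloor$, and let $q$ be the number of odd $k_i$, so that $c=1$ precisely when $q=m$. The base case $m=1$ is Theorem~\ref{Pathk2}, and for the inductive step I would reduce everything, exactly as in Sections~3--5, to three tasks: a lower-bound construction step, a connected upper bound, and a gluing step for disconnected graphs.

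For the lower bound I would verify that each listed graph is $F_m$-free with the stated number of edges. The disjoint-clique graphs $K_{\sum_{i\le j}k_i-1}\bigcup Ex(n-\sum_{i\le j}k_i+1,P_{k_j})$ are handled verbatim as in the all-even case, since the first clique can absorb $P_{k_1}\bigcup\cdots\bigcup P_{k_{j-1}}$ but not one further $P_{k_j}$. The book graph $K_{s-1}+\overline{K}_{n-s+1}$ (respectively $K_{s-1}+(K_2\bigcup\overline{K}_{n-s-1})$ when $q=m$) requires the parity argument that explains the constant $c$: any copy of $P_{k_i}$ inside a graph of the form $K_{s-1}+I$ with $I$ nearly independent uses at least $\lfloor k_i/2\rfloor$ vertices of the clique part, so a copy of $F_m$ needs at least $s$ clique vertices; with only $s-1$ available the graph is $F_m$-free, and the single extra edge inside $I$ can be exploited to save one clique vertex only when \emph{every} $k_i$ is odd, because only an odd path may place both of its endpoints in $I$. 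This is the step that forces $c=1$ iff $q=m$.

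The crux is the connected upper bound, i.e.\ determining $ex_{con}(n,F_m)$ when $q\ge 2$; this is exactly the problem flagged in the Remark after Lemma~\ref{connected}, and Theorems~\ref{p2l+1,3} and \ref{p5,p5} are the worked base cases $P_{2l+1}\bigcup P_3$ and $P_5\bigcup P_5$. I would prove that a connected $F_m$-free $G$ satisfies $e(G)\le\max\{{k-2 \choose 2}+(n-k+2),\ {s-1 \choose 2}+(s-1)(n-s+1)+c\}$, which parallels Lemma~\ref{connected}(2) with the book clique shrunk from $\lfloor k/2\rfloor-1$ to $s-1$ and the bonus $c$ attached; the book graph is then the unique extremal connected graph in the range where the second term dominates. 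Since a $P_k$ can always be cut into $F_m$, such a $G$ is $P_k$-free, so by Theorem~\ref{conenctedpath} it still contains a path only slightly shorter than $P_k$; I would then run the path-surgery machinery of Section~4 (Facts~1--3 and the degree-sum pairing of the neighbours of an off-path vertex, together with a cycle lemma in the spirit of Lemma~\ref{circle}) to show that every chord or off-path vertex either creates $F_m$ or pushes $G$ toward the book structure. The right bookkeeping device is a weighted count charging each path vertex $1$ if it lies on the ``clique side'' and $0$ otherwise, generalising the ad hoc Facts of Lemmas~\ref{5,3} and \ref{conected,p5,p5} to a statement uniform in $m$ and in the multiset of parities.

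Finally, for the disconnected case I would generalise Observations~1--7 and Lemma~\ref{km} so that they carry the additive constant $c$. If $G$ is disconnected and exceeds the claimed bound then it contains $F_{m-1}$ by the induction hypothesis; isolating the components $C_1,\dots,C_s$ that carry the pieces of $F_{m-1}$, bounding each by the connected result of the previous step, and combining super-additively yields the global bound. The one genuinely new feature over Theorem~\ref{main} is that the bonus $c$ must be shown to appear at most once across the whole decomposition, since the $K_2$-edge saving can be realised only in the single all-odd global configuration, so the gluing inequalities must be proved with the constant attached to exactly one summand. The main obstacle is the connected step: making the path-surgery argument uniform in $m$ while simultaneously tracking two or more odd parts, because the slack available for distributing the short paths shrinks as $q$ grows, and it is precisely here that a clean invariant replacing the case-by-case Facts of Sections~4 and~5 is indispensable.
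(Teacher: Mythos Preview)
The statement you are attempting to prove is the paper's closing \emph{Conjecture}; the paper offers no proof of it. What the paper does provide is evidence: Theorem~\ref{main} handles the case of at most one odd $k_i$, and Theorems~\ref{p2l+1,3} and~\ref{p5,p5} handle two sporadic instances with two odd parts. Your proposal is therefore not comparable to a proof in the paper, because none exists.

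That said, your outline is faithful to the paper's architecture and correctly locates the genuine obstruction. The lower-bound step and the disconnected gluing step are, as you say, modest extensions of the existing arguments; your parity analysis of the book graph $K_{s-1}+(K_2\cup\overline{K}_{\,\cdot\,})$ explaining why $c=1$ only when every $k_i$ is odd is correct, and the observation that the single bonus edge can be ``used'' by at most one component in the decomposition is the right way to thread the constant through Lemma~\ref{km}. But the connected step is not a matter of bookkeeping: the assertion that a connected $F_m$-free graph on $n$ vertices has at most $\max\bigl\{\binom{k-2}{2}+(n-k+2),\ \binom{s-1}{2}+(s-1)(n-s+1)+c\bigr\}$ edges is strictly stronger than Theorem~\ref{conenctedpath}, since $s-1<\lfloor k/2\rfloor-1$ whenever $q\ge 2$. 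The paper establishes this only for $P_{2l+1}\cup P_3$ and $P_5\cup P_5$, by lengthy ad~hoc case analysis (Sections~4 and~5), and explicitly flags the general problem as open in the Remark after Lemma~\ref{connected}. Your proposed ``weighted count charging each path vertex $1$ if it lies on the clique side'' is a reasonable heuristic, but it is not a proof; turning the path-surgery Facts of those sections into a statement uniform in $m$ and in the parity pattern is precisely the missing idea that keeps this a conjecture. Your proposal is a coherent research plan, not a proof.
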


\newpage
\appendix
\section{Proof of observations}
{\bf Observation 1:} Let $n\ge k_1+k_m$. Then
\begin{eqnarray*}
&&\max\left\{{k_1+k_m-2 \choose 2}+n-k_1-k_m+2,[n,k_1+k_m]\right\}\\&\leq &\max\left\{[n,k_1+k_m,k_m],[n,k_1+k_m]\right\}.
\end{eqnarray*}
{\it Proof.} We consider the following two cases.

 {\bf Case 1}. $k_1+k_m$ is odd. If $n>\frac{5(k_1+k_m)-7}{4}+\frac{2}{k_1+k_m-5}$, then
$${k_1+k_m-2 \choose 2}+n-k_1-k_m+2<[n,k_1+k_m].$$
If $n\le\frac{5(k_1+k_m)-7}{4}+\frac{2}{k_1+k_m-5}$, then
$${k_1+k_m-2 \choose 2}+n-k_1-k_m+2<{k_1+k_m-1 \choose 2}\leq[n,k_{1}+k_{m},k_{m}].$$
{\bf Case 2}. $k_1+k_m$  is even. If $n>\frac{5(k_1+k_m)-10}{4}$, then
$${k_1+k_m-2 \choose 2}+n-k_1-k_m+2<[n,k_1+k_m].$$
If $n\le\frac{5(k_1+k_m)-10}{4}$, then
$${k_1+k_m-2 \choose 2}+n-k_1-k_m+2<{k_1+k_m-1 \choose 2}\leq[n,k_{1}+k_{m},k_{m}].$$
Hence
 the assertion holds.

{\bf Observation 2:} Let $n_1\ge k_1$. Then $$[n_1,k_1+k_m,k_m]+[n_2,k_m,k_m]\leq[n_1+n_2,k_1+k_m,k_m].$$

{\it Proof.} Let $n_1=k_1+t_1(k_m-1)+r_1,n_2=t_2(k_m-1)+r_2$ and $n_1+n_2=k_1+t_3(k_m-1)+r_3$, where $0\le r_1, r_2, r_3< k_m-1.$ If $t_1\ge 1$, then
\begin{eqnarray*}
&&[n_1,k_1+k_m,k_m]+[n_2,k_m,k_m]\\&=&{k_1+k_m-1 \choose 2}+(t_1-1){k_m-1 \choose 2}+{r_1 \choose 2}+t_2{k_m-1 \choose 2}+{r_2 \choose 2}\\&\leq&{k_1+k_m-1 \choose 2}+(t_3-1){k_m-1 \choose 2}+{r_3 \choose 2}\\&=&[n_1+n_2,k_1+k_m,k_m],
\end{eqnarray*}
with equality only when $r_1=0$ or $r_2=0$. If $t_1=0$, it is easy to see that the observation holds, moreover the equality can not occur.

{\bf Observation 3:} Let $n_1 \ge k_1$ and $n_2\ge k_2$. Then $$[n_1,k_1+k_m,k_m]+[n_2,k_2+k_m,k_m]<[n_1+n_2,k_1+k_2+k_m,k_m].$$

{\it Proof.} Let $n_1=k_1+t_1(k_m-1)+r_1,n_2=k_2+t_2(k_m-1)+r_2,n_1+n_2=k_1+k_2+t_3(k_m-1)+r_3$, where $0\le r_1, r_2, r_3< k_m-1.$  If $k_1\ge 1$ and $k_2\ge 1$, then
\begin{eqnarray*}
&&
[n_1,k_1+k_m,k_m]+[n_2,k_2+k_m,k_m]\\&=&{k_1+k_m-1 \choose 2}+(t_1-1){k_m-1 \choose 2}+{r_1 \choose 2}\\&&+{k_2+k_m-1 \choose 2}+(t_2-1){k_m-1 \choose 2}+{r_2 \choose 2}\\&<&{k_1+k_2+k_m-1 \choose 2}+{k_m-1 \choose 2}+(t_1+t_2-2){k_m-1 \choose 2}+{r_1 \choose 2}+{r_2 \choose 2}\\&\leq&{k_1+k_2+k_m-1 \choose 2}+(t_3-1){k_m-1 \choose 2}+{r_3 \choose 2}.
\end{eqnarray*}
If $k_1=0$ or $k_2=0$, similarly we can prove that $[n_1,k_1+k_m,k_m]+[n_2,k_2+k_m,k_m]<[n_1+n_2,k_1+k_2+k_m,k_m]$.

{\bf Observation 4:} Let $n_1\ge k_1+k_m $ and $n_2\ge k_2+k_m$. Then $$[n_1,k_1+k_m]+[n_2,k_2+k_m]<[n_1+n_2,k_1+k_2+k_m].$$

{\it Proof.} This observation follows from the following inequality:
\begin{eqnarray*}
[n_1,k_1+k_m]+[n_2,k_2+k_m]&
\leq&[n_1+n_2-\lfloor\frac{k_2+k_m-2}{2}\rfloor,k_1+k_m]
+e(K_{\lfloor\frac{k_2+k_m-2}{2}\rfloor})\\&<&[n_1+n_2,k_1+k_2+k_m].
\end{eqnarray*}

{\bf Observation 5:} Let $n_1\ge k_1+k_m$. Then $$[n_1,k_1+k_m]+[n_2,k_m,k_m]<[n_1+n_2,k_1+k_m].$$

{\it Proof.} Let $n_2=t_2(k_m-1)+r_2$, $$G_{1}=K_{\lfloor\frac{k_1+k_m-2}{2}\rfloor}+\overline{K}_{n-\lfloor\frac{k_1+k_m-2}{2}\rfloor} \mbox{ and }G_{2}=t_2\cdot K_{k_m-1}+K_{r_2}.$$ Since $e(G_2)<n_2(\lfloor\frac{k_1+k_m}{2}\rfloor-1)$, this observation follows easily.

{\bf Observation 6:} Let $n_1\ge k_1$ and $n_2\ge k_2+k_m$.  Then $$[n_1,k_1+k_m]+[n_2,k_2+k_m,k_m]<[n_1+n_2,k_1+k_2+k_m].$$

{\it Proof.} The proof of this observation is similar to the proof of observation 5.

{\bf Observation 7:} Let $n_1\ge k_1+k_m,n_2\ge k_2$. Then
 \begin{eqnarray*}
&& [n_1,k_1+k_m,k_m]+[n_2,k_2+k_m]\\&< &\max\left\{[n_1+n_2,k_1+k_2+k_m,k_m],[n_1+n_2,k_1+k_2+k_m]\right\}.
\end{eqnarray*}
{\it Proof.} We consider the following two cases.

{\bf Case 1}. If $n_2\geq\lfloor\frac{k_1}{2}\rfloor+\lfloor\frac{k_2}{2}\rfloor+\lfloor\frac{k_m}{2}\rfloor-1$, then $[n_1,k_1+k_m,k_m]+[n_2,k_2+k_m]<[n_2,k_1+k_2+k_m]+[n_1,k_1+k_m,k_m] \leq[n_1+n_2,k_1+k_2+k_m]$.\\

{\bf Case 2.} If $n_2\leq\lfloor\frac{k_1}{2}\rfloor+\lfloor\frac{k_2}{2}\rfloor+\lfloor\frac{k_m}{2}\rfloor-1$, then $[n_1,k_1+k_m,k_m]+[n_2,k_2+k_m]< [n_1+n_2,k_1+\lfloor\frac{k_2}{2}\rfloor+\lfloor\frac{k_m}{2}\rfloor+k_m,k_m]\leq[n_1+n_2,k_1+k_2+k_m,k_m]$.
\label{lastpage}
\end{document}